\documentclass[a4paper,12pt]{extarticle}

\usepackage[utf8]{inputenc}
\usepackage{geometry}
\usepackage{graphicx}
\usepackage{booktabs}
\usepackage{mathrsfs}
\usepackage{bm}
\usepackage{xcolor}
\usepackage{algorithm}
\usepackage{algpseudocode}
\usepackage{mathtools}
\usepackage{enumitem}
\usepackage{tikz-cd}

\usepackage{newtxtext}
\usepackage{newtxmath}

\usepackage{amssymb}

\usepackage{amsthm}

\usepackage{hyperref}
\usepackage{xcolor} 

\newtheorem{theorem}{Theorem}
\newtheorem{proposition}[theorem]{Proposition}
\newtheorem{lemma}[theorem]{Lemma}
\newtheorem{corollary}[theorem]{Corollary}
\newtheorem{definition}{Definition}

\newcommand{\ch}{\operatorname{ch}}
\newcommand{\td}{\operatorname{td}}

\theoremstyle{definition}

\newtheorem{remark}{Remark}

\geometry{left=2.5cm,right=2.5cm,top=2.5cm,bottom=2.5cm}

\begin{document}

\pagestyle{plain}

\title{Spencer-Riemann-Roch Theory: Mirror Symmetry of Hodge Decompositions and Characteristic Classes in Constrained Geometry}
\author{Dongzhe Zheng\thanks{Department of Mechanical and Aerospace Engineering, Princeton University\\ Email: \href{mailto:dz5992@princeton.edu}{dz5992@princeton.edu}, \href{mailto:dz1011@wildcats.unh.edu}{dz1011@wildcats.unh.edu}}}
\date{}
\maketitle

\begin{abstract}
The discovery of mirror symmetry in compatible pair Spencer complex theory brings new theoretical tools to the study of constrained geometry. Inspired by classical Spencer theory and modern Hodge theory, this paper establishes Spencer-Riemann-Roch theory in the context of constrained geometry, systematically studying the mirror symmetry of Spencer-Hodge decompositions and their manifestations in algebraic geometry. We utilize Serre's GAGA principle to algebraic geometrize Spencer complexes, establish coherent sheaf formulations, and reveal the topological essence of mirror symmetry through characteristic class theory. Main results include: Riemann-Roch type Euler characteristic computation formulas for Spencer complexes, equivalence theorems for mirror symmetry of Hodge decompositions at the characteristic class level, and verification of these theories in concrete geometric constructions. Research shows that algebraic geometric methods can not only reproduce deep results from differential geometry, but also reveal the intrinsic structure of mirror symmetry in constrained geometry through characteristic class analysis, opening new directions for applications of Spencer theory in constrained geometry.
\end{abstract}

\section{Introduction}

Spencer complexes, as important tools in differential geometry for handling overdetermined differential equation systems, have theoretical foundations traceable to Spencer's pioneering work\cite{spencer1962deformation}. With further development by Goldschmidt\cite{goldschmidt1967integrability} and Quillen\cite{quillen1969formal}, Spencer theory gradually formed a complete mathematical framework. In recent years, the development of constrained geometry has injected new vitality into Spencer theory, particularly the introduction of compatible pair concepts, which provides a unified geometric framework for handling constrained dynamical systems.

The systematic development of compatible pair Spencer complex theory is based on the solid foundation of a series of preliminary works\cite{zheng2025dynamical,zheng2025geometric,zheng2025constructing,zheng2025mirror}. Reference\cite{zheng2025dynamical} established the dynamical geometric theory of principal bundle constrained systems through the pairing of constraint distribution $D$ and dual constraint function $\lambda$, with the introduction of strong transversality conditions and modified Cartan equations forming the core framework of compatible pair theory. Reference\cite{zheng2025geometric} discovered the deep symmetry of mirror transformation $(D,\lambda) \mapsto (D,-\lambda)$, establishing geometric duality theory between constraints and gauge fields. Reference\cite{zheng2025constructing} established complete Hodge decomposition theory by constructing Spencer metrics, providing a solid analytical foundation for Spencer complexes. Reference\cite{zheng2025mirror} further revealed the mirror symmetry of Spencer-Hodge decompositions, discovering metric invariance and cohomological equivalence, providing clear targets for this paper's algebraic geometric advancement.

Traditional Spencer theory mainly focuses on local integrability problems, while the innovation of compatible pair theory lies in establishing global geometric structures through the pairing of constraint distributions and dual constraint functions. This approach is deeply influenced by Cartan geometry\cite{sharpe1997differential} and modern gauge field theory, extending the analysis of constrained systems from local to global. The introduction of strong transversality conditions ensures the ellipticity of Spencer complexes, which lays the foundation for subsequent Hodge theory analysis.

Hodge theory, as a core tool of modern geometric analysis\cite{warner1983foundations}, demonstrates the deep connection between constrained geometry and elliptic theory through its application in Spencer complexes. The establishment of compatible pair Spencer-Hodge decomposition not only proves the finite dimensionality of harmonic spaces, but more importantly, reveals topological invariants in constrained geometry. This decomposition is similar in spirit to the Atiyah-Singer index theorem\cite{atiyah1963index}'s treatment of elliptic operators, but presents unique characteristics in the context of constrained geometry.

The discovery of mirror symmetry brings unexpected depth to compatible pair theory. The invariance of Spencer structures under the transformation $(D,\lambda) \mapsto (D,-\lambda)$ is not only a technical result, but also reveals the intrinsic symmetry of constrained geometry. This symmetry echoes mirror symmetry phenomena in mathematical physics\cite{strominger1996mirror} to some extent, although their mathematical mechanisms are completely different. The mirror invariance of Spencer metrics and the symmetry of cohomology groups indicate that compatible pair theory may have more profound mathematical structure than appears on the surface.

The introduction of algebraic geometric methods provides new possibilities for deepening this theoretical understanding. Inspired by the algebraic geometric revolution of the Grothendieck school\cite{grothendieck1957theoremes}, we recognize that many differential geometric problems can gain deeper understanding within the algebraic geometric framework. Serre's GAGA principle\cite{serre1956geometrie} provides fundamental tools for connecting analytic geometry with algebraic geometry, enabling us to transform the analysis of Spencer complexes into the study of coherent sheaves.

The research motivation of this paper stems from the need for further understanding of the mathematical essence of compatible pair theory. While differential geometric methods provide intuitive geometric pictures for theory establishment, the abstractness of algebraic geometry may reveal deeper structures of problems. In particular, the manifestation of mirror symmetry at the characteristic class level may have more essential significance than metric invariance. The Hirzebruch-Riemann-Roch theorem\cite{hirzebruch1966topological}, as a bridge connecting topology and geometry, provides new approaches for Euler characteristic computation of Spencer complexes.

We must acknowledge that this research still faces significant limitations. The strict Lie group conditions (compact connected semi-simple with trivial center), while ensuring mathematical rigor of the theory, also limit the scope of applications. Many practically important Lie groups do not satisfy these conditions, which constrains direct applications of the theory. Nevertheless, we believe that the theoretical framework established under strict conditions provides a reliable foundation for subsequent generalizations. Strict assumptions are adopted solely for foundational rigor; extensions beyond these constraints constitute future directions.

The main contributions of this paper can be summarized as four aspects of preliminary attempts. First, we establish the algebraic geometric formulation of Spencer complexes, reinterpreting the basic concepts of compatible pairs using coherent sheaf theory. Second, we prove the equivalence of mirror symmetry at the characteristic class level, transforming metric invariance into topological properties. Third, we derive Euler characteristic computation formulas based on Riemann-Roch theory, providing new tools for quantitative analysis of Spencer complexes. Finally, we verify the internal consistency of the theory through concrete calculations of $\text{PSU}(2)$ on the complex projective plane.

We hope this preliminary exploration can provide some beneficial insights for cross-research between constrained geometry and algebraic geometry. Although our results are still preliminary and the applicable scope of the theory is limited, the potential demonstrated by algebraic geometric methods is encouraging. We believe that with further development of techniques and gradual relaxation of conditions, this cross-field area may produce more profound mathematical understanding.

\section{Compatible Pair Spencer Theory in Algebraic Geometric Context}

This section establishes the theoretical foundation for transforming compatible pair Spencer theory from differential geometry to algebraic geometry, focusing on reviewing core concepts and results needed for algebraic geometrization. We will establish strict correspondences through the GAGA principle and ensure preservation of the deep structures of the original theory within the algebraic geometric framework.

\subsection{Differential Geometric Foundation of Compatible Pair Spencer Complexes}

The compatible pair theory established in reference\cite{zheng2025dynamical} provides a solid starting point for our algebraic geometrization. Compatible pairs $(D,\lambda)$ consist of constraint distribution $D \subset TP$ and dual constraint function $\lambda: P \to \mathfrak{g}^*$, achieving unified description of constrained geometry through three core conditions. The \textbf{strong transversality condition} $D_p \oplus V_p = T_pP$ ensures geometric non-degeneracy of the constraint distribution, where $V_p$ is the vertical subspace of the fiber. The \textbf{modified Cartan equation}
\begin{equation}\label{eq:cartan}
d\lambda + \mathrm{ad}^*_\omega \lambda = 0
\end{equation}
establishes the covariant relationship between dual functions and principal bundle connections, embodying the deep coupling between constraints and gauge fields. The \textbf{compatibility condition}
\begin{equation}\label{eq:compatibility}
D_p = \{v \in T_pP : \langle\lambda(p), \omega(v)\rangle = 0\}
\end{equation}
unifies algebraic constraint conditions with geometric distributions.

Based on compatible pair structures, Spencer complexes $(S^{\bullet}_{D,\lambda}, D^{\bullet}_{D,\lambda})$ can be constructed. Spencer spaces are defined as
\begin{equation}\label{eq:spencer-space}
S^k_{D,\lambda} = \Omega^k(M) \otimes \mathrm{Sym}^k(\mathfrak{g})
\end{equation}
combining exterior differential forms with symmetric tensors of the Lie algebra. Spencer differential operators are composed of standard exterior differentials and Spencer extension operators:
\begin{equation}\label{eq:spencer-diff}
D^k_{D,\lambda}(\omega \otimes s) = d\omega \otimes s + (-1)^k\omega \otimes \delta^{\lambda}_{\mathfrak{g}}(s)
\end{equation}



\begin{definition}[Constructive Spencer Extension Operator]
\label{def:constructive_spencer_operator}
The \textbf{Spencer extension operator} $\delta^{\lambda}_{\mathfrak{g}}$ is defined as a \textbf{graded derivation of degree +1} on the symmetric algebra $S(\mathfrak{g}) = \bigoplus_{k=0}^{\infty} \mathrm{Sym}^k(\mathfrak{g})$. It is uniquely determined by the following two rules:
\begin{enumerate}
    \item[\textbf{(A)}] \textbf{Action on Generators ($k=1$):} For any Lie algebra element $v \in \mathfrak{g} = \mathrm{Sym}^1(\mathfrak{g})$, its image $\delta^{\lambda}_{\mathfrak{g}}(v)$ is a second-order symmetric tensor in $\mathrm{Sym}^2(\mathfrak{g})$, defined by its action on two test vectors $w_1, w_2 \in \mathfrak{g}$:
    \begin{equation}
        (\delta^{\lambda}_{\mathfrak{g}}(v))(w_1, w_2) := \frac{1}{2} \left( \langle\lambda, [w_1, [w_2, v]]\rangle + \langle\lambda, [w_2, [w_1, v]]\rangle \right)
    \end{equation}
    where $\langle \cdot, \cdot \rangle$ denotes the pairing between $\mathfrak{g}^*$ and $\mathfrak{g}$. This construction ensures the output is symmetric in $w_1$ and $w_2$.

    \item[\textbf{(B)}] \textbf{Graded Leibniz Rule:} For any homogeneous elements $s_1 \in \mathrm{Sym}^p(\mathfrak{g})$ and $s_2 \in \mathrm{Sym}^q(\mathfrak{g})$, the operator satisfies:
    \begin{equation}
        \delta^{\lambda}_{\mathfrak{g}}(s_1 \odot s_2) := \delta^{\lambda}_{\mathfrak{g}}(s_1) \odot s_2 + (-1)^p s_1 \odot \delta^{\lambda}_{\mathfrak{g}}(s_2)
    \end{equation}
    where $\odot$ is the symmetric product.
\end{enumerate}
This constructive definition rigorously establishes the operator's action on symmetric tensors of all orders and forms the algebraic foundation for the Spencer complex in this context. Key algebraic properties, such as nilpotency $((\delta^{\lambda}_{\mathfrak{g}})^2 = 0)$ and mirror antisymmetry $(\delta^{-\lambda}_{\mathfrak{g}} = -\delta^{\lambda}_{\mathfrak{g}})$, are derived directly from this definition \cite{zheng2025geometric, zheng2025constructing}.
\end{definition}

Our algebraic geometrization work requires the Lie group $G$ to satisfy strict conditions: it must be \textbf{compact, connected, semi-simple, and have a trivial center}. These conditions not only ensure the mathematical rigor of the compatible pair theory but also guarantee the stability of the algebraic geometrization process. Compactness ensures the well-posedness of variational problems, semi-simplicity provides a non-degenerate Killing form, the trivial center avoids unnecessary reductions, and connectedness ensures the geometric consistency of the Lie group actions.

\subsection{Review of Spencer-Hodge Decomposition Theory}

The Spencer-Hodge decomposition theory established in reference\cite{zheng2025geometric} is the core technical foundation for our algebraic geometrization. This theory establishes complete harmonic theory by constructing Spencer metrics and corresponding Laplacian operators. Spencer metrics are naturally induced based on the geometric structure of compatible pairs, where constraint strength metrics use weight functions $w_\lambda(x) = 1 + \|\lambda(p)\|^2_{\mathfrak{g}^*}$ to construct inner products on Spencer spaces:
\begin{equation}\label{eq:spencer-metric}
\langle\omega_1 \otimes s_1, \omega_2 \otimes s_2\rangle = \int_M w_\lambda(x) \langle\omega_1, \omega_2\rangle \langle s_1, s_2\rangle_{\mathrm{Sym}} \, d\mu
\end{equation}

This metric structure is naturally compatible with strong transversality conditions, ensuring ellipticity properties of Spencer complexes. Based on the metric structure, the Spencer-Hodge Laplacian is defined as
\begin{equation}\label{eq:spencer-laplacian}
\Delta^k_{D,\lambda} = (D^{k-1}_{D,\lambda})^* D^{k-1}_{D,\lambda} + D^{k+1}_{D,\lambda}(D^{k+1}_{D,\lambda})^*
\end{equation}
where the adjoint operator is determined by the Spencer metric.

Strong transversality conditions ensure ellipticity of Spencer operators\cite{zheng2025mirror, zheng2025constructing}, thus obtaining complete Hodge decomposition:
\begin{equation}\label{eq:hodge-decomp}
S^k_{D,\lambda} = \mathcal{H}^k_{D,\lambda} \oplus \mathrm{im}(D^{k-1}_{D,\lambda}) \oplus \mathrm{im}((D^k_{D,\lambda})^*)
\end{equation}

The harmonic space $\mathcal{H}^k_{D,\lambda} = \ker(\Delta^k_{D,\lambda})$ is finite dimensional and naturally isomorphic to Spencer cohomology:
\begin{equation}\label{eq:hodge-isomorphism}
H^k_{\mathrm{Spencer}}(D,\lambda) \cong \mathcal{H}^k_{D,\lambda}
\end{equation}

This isomorphism relationship is key to establishing algebraic geometric Riemann-Roch theory, as it transforms abstract cohomology computations into concrete harmonic analysis problems.

\subsection{Algebraic Geometric Goals of Mirror Symmetry}

The mirror symmetry $(D,\lambda) \mapsto (D,-\lambda)$ discovered in reference\cite{zheng2025mirror} provides clear goals for our algebraic geometric research. This transformation preserves all geometric properties of compatible pairs, particularly strong transversality conditions, modified Cartan equations, and compatibility conditions remain invariant under sign transformation. The deep manifestation of mirror symmetry is the induced natural isomorphism of Spencer cohomology:
\begin{equation}\label{eq:mirror-isomorphism}
H^k_{\mathrm{Spencer}}(D,\lambda) \cong H^k_{\mathrm{Spencer}}(D,-\lambda)
\end{equation}

This reflects not only symmetry of topological structures, but also suggests intrinsic symmetry of geometric structures.

In the algebraic geometric framework, we need to transform this mirror symmetry into characteristic class equivalence and symmetry of Riemann-Roch formulas. This requires us to prove that vector bundles $\mathcal{G}_\lambda$ and $\mathcal{G}_{-\lambda}$ have equal Chern classes, Spencer complex Euler characteristics are invariant under mirror transformation, and Riemann-Roch integrals have mirror symmetry. Achieving these goals will provide deep algebraic geometric understanding of mirror symmetry.

\subsection{GAGA Correspondence for Algebraic Geometric Realization}

We work in a strict algebraic geometric setting: $M$ is a compact complex algebraic manifold, $G$ is a complex Lie group satisfying strict conditions, and $P(M,G)$ is an algebraic principal bundle. Algebraic geometric Spencer spaces are defined as
\begin{equation}\label{eq:alg-spencer-space}
\mathcal{S}^k_{D,\lambda} := H^0(M, \Omega^k_M \otimes \mathrm{Sym}^k(\mathcal{G}))
\end{equation}
where $\Omega^k_M$ is the algebraic differential form sheaf and $\mathcal{G} = P \times_G \mathfrak{g}$ is the adjoint vector bundle. This definition realizes a natural transition to algebraic geometry by abstracting concrete constructions from differential geometry into global sections of coherent sheaves.

Serre's GAGA theorem\cite{serre1956geometrie} ensures natural isomorphism between algebraic and analytic versions:
\begin{equation}\label{eq:gaga-correspondence}
\mathcal{S}^k_{D,\lambda}(\text{algebraic}) \cong S^k_{D,\lambda}(\text{analytic})
\end{equation}

This correspondence not only establishes bijections between objects, but more importantly preserves all relevant algebraic and geometric structures. Spencer differential operators $\mathcal{D}^k_{D,\lambda}: \mathcal{S}^k_{D,\lambda} \to \mathcal{S}^{k+1}_{D,\lambda}$ are well-defined in the algebraic geometric sense, having exactly the same form as the differential geometric version. Application of the GAGA principle ensures that all deep results established in differential geometry can be faithfully represented in the algebraic geometric framework.

Strict Lie group conditions play a key role in algebraic geometrization. Semi-simplicity ensures non-degeneracy of Killing forms, guaranteeing well-definedness and non-degeneracy of Spencer extension operators. Trivial center avoids degenerate cases, simplifying algebraic geometric analysis and ensuring uniqueness of results. Compactness ensures finiteness of all coherent sheaves and effectiveness of GAGA correspondence. Algebraicity makes the entire construction natural and stable in the algebraic geometric category.

\subsection{Algebraic Geometric Formulation of Ellipticity and Harmonic Theory}

Strong transversality conditions in the algebraic geometric context are equivalent to ellipticity of Spencer complexes. The core of this equivalence lies in principal symbol analysis of Spencer differential operators: principal symbols are determined by principal symbols of exterior differentials, while Spencer extension parts contribute lower-order terms. Strong transversality conditions ensure invertibility of principal symbols, thus establishing ellipticity. Ellipticity further ensures regularity theory of Spencer operators, finite dimensionality of harmonic spaces, existence of Hodge decomposition, and applicability of Riemann-Roch formulas.

Algebraic geometric Hodge decomposition is
\begin{equation}\label{eq:alg-hodge-decomp}
\mathcal{S}^k_{D,\lambda} = \mathcal{H}^k_{D,\lambda} \oplus \mathcal{D}^{k-1}(\mathcal{S}^{k-1}_{D,\lambda}) \oplus (\mathcal{D}^{k+1})^*(\mathcal{S}^{k+1}_{D,\lambda})
\end{equation}
where the harmonic space $\mathcal{H}^k_{D,\lambda}$ is completely consistent with the differential geometric version through the GAGA principle. This consistency not only ensures continuity of theory, but also provides a solid foundation for applications of algebraic geometric tools.

Ellipticity properties have particularly strong stability under strict Lie group conditions. Semi-simplicity ensures non-degeneracy of Killing forms, strengthening stability of elliptic estimates. Trivial center avoids degenerate terms in principal symbols, simplifying elliptic analysis. Compactness ensures complete applicability of functional analysis tools. These advantages enable us to establish deep theory completely corresponding to differential geometry within the algebraic geometric framework, and provide reliable technical foundations for subsequent characteristic class analysis and Riemann-Roch computations.

Based on these core concepts and results, we can now systematically develop algebraic geometric theory of Spencer complexes and establish complete correspondences with existing differential geometric theory. This theoretical foundation enables us to analyze Spencer complexes using coherent sheaf theory, study mirror symmetry using characteristic class theory, establish precise Riemann-Roch formulas, and verify internal consistency of the theory.

\section{Algebraic Geometric Realization of Spencer-Hodge Decomposition}

Based on the previously established GAGA correspondence\eqref{eq:gaga-correspondence} and algebraic geometric Spencer complexes\eqref{eq:alg-spencer-space}, we now construct the algebraic geometric version of Hodge decomposition theory. This section will re-implement the differential geometric Spencer-Hodge theory\eqref{eq:hodge-decomp} and isomorphism relations\eqref{eq:hodge-isomorphism} within the algebraic geometric framework.

\subsection{Construction of Algebraic Geometric Hermitian Structures}

The Spencer metric\eqref{eq:spencer-metric} established in reference\cite{zheng2025geometric} transforms into Hermitian structures on vector bundles $\mathcal{G}$ in the algebraic geometric context.

\begin{definition}[Algebraic Geometric Spencer Metric]
For Lie groups $G$ satisfying strict conditions, compatible pairs $(D,\lambda)$ induce the following metric structures in the algebraic geometric context: Hermitian metric $g_{\mathcal{G}}$ on vector bundle $\mathcal{G}$ naturally induced by compactness of $G$; inner product $\langle \cdot, \cdot \rangle_k$ on Spencer spaces $\mathcal{S}^k_{D,\lambda}$ inherited from\eqref{eq:spencer-metric} through GAGA correspondence; metric-compatible connection $\nabla^{\mathcal{G}}$ whose uniqueness is guaranteed by semi-simplicity. These metric structures are completely determined by geometric properties of compatible pairs, consistent with differential geometric metrics through the GAGA principle\eqref{eq:gaga-correspondence}.
\end{definition}

\begin{remark}[Metric Advantages under Strict Conditions]
Under strict Lie group conditions, Spencer metrics have the following superior properties: \textbf{Naturality} stems from natural metrics provided by Killing forms of compact semi-simple Lie groups; \textbf{Uniqueness} is ensured by trivial center guaranteeing essential uniqueness of metric choices; \textbf{Stability} is ensured by semi-simplicity guaranteeing non-degeneracy of metrics and ellipticity.
\end{remark}

Based on this metric structure, algebraic geometric Laplacian operators are defined as:

\begin{definition}[Algebraic Geometric Laplacian Operator]
Define Laplacian operator $\Delta^k_{D,\lambda}: \mathcal{S}^k_{D,\lambda} \to \mathcal{S}^k_{D,\lambda}$ as:
\begin{equation}\label{eq:alg-laplacian}
\Delta^k_{D,\lambda} := (\mathcal{D}^{k-1})^* \mathcal{D}^{k-1} + \mathcal{D}^{k+1}(\mathcal{D}^{k+1})^*
\end{equation}
where adjoint operators $(\mathcal{D}^k)^*$ are determined by algebraic geometric inner products $\langle \cdot, \cdot \rangle_k$.
\end{definition}

\begin{proposition}[Local Expression of Laplacian Operator]
In local coordinates $\{x^i\}$ and Lie algebra basis $\{e_a\}$, for strict Lie groups, Laplacian operators have the expression:
\begin{equation}\label{eq:laplacian-local}
\Delta^k_{D,\lambda} = -g^{ij} \frac{\partial^2}{\partial x^i \partial x^j} + \text{lower order terms}
\end{equation}
where $g^{ij}$ is the inverse metric, and lower order terms are determined by Spencer structure and compatible pair geometry. Under strict conditions, lower order terms have explicit algebraic expressions and maintain ellipticity strength.
\end{proposition}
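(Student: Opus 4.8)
The strategy is to reduce the statement to a symbol computation for the Spencer differential $\mathcal{D}^k_{D,\lambda}$ and then assemble $\Delta^k_{D,\lambda}$ from the definition \eqref{eq:alg-laplacian}. First I would write $\mathcal{D}^k_{D,\lambda}(\omega \otimes s) = d\omega \otimes s + (-1)^k \omega \otimes \delta^{\lambda}_{\mathfrak{g}}(s)$ as in \eqref{eq:spencer-diff} and observe that only the first summand is a genuine first-order differential operator in the base coordinates $\{x^i\}$, while the Spencer extension term $\delta^{\lambda}_{\mathfrak{g}}$ is purely algebraic (a zeroth-order bundle endomorphism, fiberwise a graded derivation on $\mathrm{Sym}^{\bullet}(\mathfrak{g})$ as in Definition \ref{def:constructive_spencer_operator}). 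Hence the principal symbol $\sigma(\mathcal{D}^k_{D,\lambda})(\xi)$ equals $\sigma(d)(\xi) \otimes \mathrm{id}_{\mathrm{Sym}^k(\mathcal{G})}$, i.e. exterior multiplication $\xi \wedge \cdot$ tensored with the identity on the symmetric-tensor factor; the $\delta^{\lambda}_{\mathfrak{g}}$-piece drops out of the top-order symbol and contributes only to lower-order terms.

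Next I would compute the symbol of the adjoint. Using the algebraic geometric inner product $\langle\cdot,\cdot\rangle_k$ — which by the Definition of the Algebraic Geometric Spencer Metric factors through the $L^2$-pairing on $\Omega^k_M$ (determined by the Kähler/Hermitian metric $g$ on $M$) tensored with the fiber metric on $\mathrm{Sym}^k(\mathcal{G})$ induced by the Killing form and compactness of $G$ — the symbol of $(\mathcal{D}^k_{D,\lambda})^*$ is the adjoint of $\xi \wedge \cdot$ with respect to $g$, namely contraction $\iota_{\xi^{\sharp}} \cdot$, again tensored with the identity on the symmetric factor. Substituting into \eqref{eq:alg-laplacian} gives
\begin{equation}
\sigma(\Delta^k_{D,\lambda})(\xi) = \big(\iota_{\xi^{\sharp}}\, \xi \wedge \cdot + \xi \wedge \iota_{\xi^{\sharp}}\,\cdot\big) \otimes \mathrm{id} = |\xi|^2_g \cdot \mathrm{id},
\end{equation}
by the standard Cartan identity for the Hodge Laplacian on forms. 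In local coordinates $|\xi|^2_g = g^{ij}\xi_i\xi_j$, which is exactly the principal-symbol content of \eqref{eq:laplacian-local} and simultaneously re-proves ellipticity (strong transversality guarantees $g$ is a genuine Riemannian/Hermitian metric, so $g^{ij}\xi_i\xi_j \neq 0$ for $\xi \neq 0$). Finally, to justify the claim that ``lower order terms have explicit algebraic expressions,'' I would expand $\Delta^k_{D,\lambda}$ beyond the symbol: the cross terms between $d$ and $\delta^{\lambda}_{\mathfrak{g}}$, together with $(\delta^{\lambda}_{\mathfrak{g}})^*\delta^{\lambda}_{\mathfrak{g}}$ and the curvature/connection terms from $\nabla^{\mathcal{G}}$, are all either first-order (linear in $\partial/\partial x^i$, with coefficients built from Christoffel symbols of $g$ and the structure constants $f^c_{ab}$ paired against $\lambda$) or zeroth-order (quadratic expressions in $\langle\lambda,[\cdot,[\cdot,\cdot]]\rangle$ acting on $\mathrm{Sym}^k(\mathfrak{g})$), hence ``explicit and algebraic'' once the Lie algebra basis $\{e_a\}$ and the local frame are fixed.

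The main obstacle I anticipate is bookkeeping rather than conceptual: correctly tracking the grading signs $(-1)^k$ through the adjoints and the cross terms, and verifying that the ``lower order terms'' genuinely close up into polynomial expressions in the structure constants and $\lambda$ with no hidden derivatives of $\lambda$ of order higher than expected — this is where the modified Cartan equation \eqref{eq:cartan}, $d\lambda + \mathrm{ad}^*_\omega \lambda = 0$, must be invoked to rewrite any stray $d\lambda$ in terms of $\lambda$ and the connection, keeping the first-order coefficients algebraic in the stated data. A secondary point requiring care is confirming that the GAGA correspondence \eqref{eq:gaga-correspondence} legitimately transports the analytic metric \eqref{eq:spencer-metric} to the algebraic setting so that the adjoint $(\mathcal{D}^k)^*$ — a priori only defined analytically — has the local form used above; but since the symbol and the lower-order structure are local and the metric is the pullback of the fixed Hermitian data, this is routine given the hypotheses already in force (compactness of $M$ and $G$, semi-simplicity for non-degeneracy of the Killing form).
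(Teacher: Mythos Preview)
The paper states this proposition without proof; the principal-symbol computation you outline is correct and matches exactly the argument the paper later supplies in Step~1 of the proof of the Algebraic Geometric Version of Spencer--Hodge Decomposition, where it asserts $\sigma(\mathcal{D}^k_{D,\lambda})(\xi) = \sigma(d)(\xi) \otimes \mathrm{id}_{\mathrm{Sym}^k(\mathcal{G})}$ with $\sigma(d)(\xi) = \xi \wedge (\cdot)$. Your plan actually goes further than the paper does: you explicitly assemble the Laplacian symbol via the Cartan identity $\iota_{\xi^{\sharp}}\,\xi\wedge + \xi\wedge\iota_{\xi^{\sharp}} = |\xi|^2_g$ and you sketch how the lower-order terms close up algebraically (including the use of the modified Cartan equation \eqref{eq:cartan} to eliminate stray $d\lambda$ terms), whereas the paper leaves all of this implicit.

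One small point to tighten: your remark that ``strong transversality guarantees $g$ is a genuine Riemannian/Hermitian metric'' conflates two things --- the metric $g$ on $M$ is fixed a priori as part of the K\"ahler/Hermitian data, while strong transversality concerns the distribution $D$ on the principal bundle $P$. The ellipticity of $\Delta^k_{D,\lambda}$ already follows from the symbol computation once $g$ is non-degenerate; strong transversality is what ensures the Spencer extension $\delta^{\lambda}_{\mathfrak{g}}$ is well-behaved (non-degenerate, as the paper emphasizes), not what makes $g^{ij}$ invertible. This does not affect the correctness of your argument, only the attribution of which hypothesis does what.
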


\subsection{Algebraic Geometric Hodge Decomposition Theorem}

We now state and prove the algebraic geometric version of the Spencer-Hodge decomposition theorem:

\begin{theorem}[Algebraic Geometric Version of Spencer-Hodge Decomposition]
Let $(D,\lambda)$ be a compatible pair satisfying strong transversality conditions, with Lie group $G$ satisfying strict conditions. Then there exists an orthogonal decomposition:
\begin{equation}\label{eq:alg-hodge-detailed}
\mathcal{S}^k_{D,\lambda} = \mathcal{H}^k_{D,\lambda} \oplus \mathcal{D}^{k-1}(\mathcal{S}^{k-1}_{D,\lambda}) \oplus \mathcal{D}^{k+1*}(\mathcal{S}^{k+1}_{D,\lambda})
\end{equation}
where $\mathcal{H}^k_{D,\lambda} := \ker(\Delta^k_{D,\lambda}) \cap \mathcal{S}^k_{D,\lambda}$ is the algebraic geometric harmonic space.
\end{theorem}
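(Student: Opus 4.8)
The plan is to exhibit $(\mathcal{S}^\bullet_{D,\lambda},\mathcal{D}^\bullet_{D,\lambda})$ as an \emph{elliptic complex} over the compact manifold $M$, apply the standard Hodge decomposition for elliptic complexes on compact manifolds, and then transport the conclusion to the algebraic category through the GAGA correspondence \eqref{eq:gaga-correspondence}. First I would establish ellipticity. By Definition~\ref{def:constructive_spencer_operator} the extension operator $\delta^\lambda_{\mathfrak{g}}$ acts fibrewise, i.e. is of order zero, so the Spencer differential of the form \eqref{eq:spencer-diff} has principal symbol $\sigma_\xi(\mathcal{D}^k) = \sigma_\xi(d)\otimes\mathrm{id}_{\mathrm{Sym}^k\mathcal{G}} = (\xi\wedge(\cdot))\otimes\mathrm{id}$. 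The symbol sequence of the de Rham complex is the Koszul complex of $\xi\wedge(\cdot)$, exact for every $\xi\neq 0$; tensoring with the fixed fibre $\mathrm{Sym}^k\mathcal{G}_x$ preserves exactness, so the symbol sequence of the Spencer complex is exact and the complex is elliptic. The strict Lie group hypotheses enter exactly here as recorded in the preliminary sections: semisimplicity makes $\delta^\lambda_{\mathfrak{g}}$ well defined and endows $\mathrm{Sym}^k\mathcal{G}$ with the positive definite fibre metric coming from the Killing form, while strong transversality guarantees the compatible-pair data underlying the complex is non-degenerate, so the complex is genuinely — not merely formally — elliptic.

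Next, with the positive definite Spencer Hermitian product $\langle\cdot,\cdot\rangle_k$ of the Algebraic Geometric Spencer Metric (positivity uses $w_\lambda\ge 1$ together with positivity of the $\mathrm{Sym}$-metric), the operator $\mathcal{D}^*$ is a genuine formal adjoint and the Laplacian \eqref{eq:alg-laplacian} is a second-order, formally self-adjoint, non-negative operator whose principal symbol is the corresponding sum of symbol compositions; exactness of the symbol complex forces this to be an isomorphism for $\xi\neq 0$, so $\Delta^k_{D,\lambda}$ is elliptic. Since $M$ is compact, the Fredholm theory of elliptic operators then gives that $\mathcal{H}^k_{D,\lambda}=\ker\Delta^k_{D,\lambda}$ is finite dimensional, that its elements are smooth by elliptic regularity (hence algebraic via GAGA), and that on the appropriate Sobolev completion one has the orthogonal splitting $\mathcal{S}^k_{D,\lambda}=\ker\Delta^k_{D,\lambda}\oplus\mathrm{im}\,\Delta^k_{D,\lambda}$, which descends to the smooth/algebraic level once more by elliptic regularity.

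It then remains to refine $\mathrm{im}\,\Delta^k_{D,\lambda}$ and to transfer the statement. The standard identity $\langle\Delta^k_{D,\lambda}s,s\rangle = \|\mathcal{D}s\|^2 + \|\mathcal{D}^*s\|^2$ (with $\mathcal{D},\mathcal{D}^*$ the relevant Spencer differential and its adjoint) identifies $\mathcal{H}^k_{D,\lambda}$ with the stated kernel intersection, and by the usual argument for elliptic complexes, using $\mathcal{D}^{k}\circ\mathcal{D}^{k-1}=0$ — a consequence of $d^2=0$ and the nilpotency $(\delta^\lambda_{\mathfrak{g}})^2=0$ from Definition~\ref{def:constructive_spencer_operator} — the space $\mathrm{im}\,\Delta^k_{D,\lambda}$ splits orthogonally as $\mathcal{D}^{k-1}(\mathcal{S}^{k-1}_{D,\lambda})\oplus\mathcal{D}^{k+1*}(\mathcal{S}^{k+1}_{D,\lambda})$, each summand being orthogonal to $\mathcal{H}^k_{D,\lambda}$ (harmonic elements are killed by $\mathcal{D}^*$, hence orthogonal to $\mathrm{im}\,\mathcal{D}^{k-1}$, and killed by $\mathcal{D}^k$, hence orthogonal to the image of the adjoint). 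This produces exactly \eqref{eq:alg-hodge-detailed}. Finally GAGA \eqref{eq:gaga-correspondence} identifies every ingredient — spaces, differentials, adjoints, Laplacians, harmonic spaces — with its analytic counterpart, so the decomposition coincides with the differential-geometric one \eqref{eq:hodge-decomp}, and in particular $\mathcal{H}^k_{D,\lambda}\cong H^k_{\mathrm{Spencer}}(D,\lambda)$.

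I expect the main obstacle to be the first step: the genuinely substantive point is verifying rigorously that $\delta^\lambda_{\mathfrak{g}}$ contributes only order-zero terms, so that the principal symbol is literally that of the de Rham complex and the symbol sequence inherits its exactness, together with the analytic bookkeeping — choice of Sobolev completions, elliptic regularity to remain in the smooth class, and the GAGA comparison — needed to pass from the a priori infinite-dimensional smooth Hodge theory to the coherent-sheaf formulation asserted in the theorem. The remaining algebra (orthogonality of the three summands, identification of the harmonic space with cohomology) is routine once ellipticity and the compactness of $M$ are in hand.
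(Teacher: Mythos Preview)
Your proposal is correct and follows essentially the same route as the paper: verify that $\delta^\lambda_{\mathfrak{g}}$ is order zero so that the principal symbol of $\mathcal{D}^k$ is $(\xi\wedge\cdot)\otimes\mathrm{id}$, deduce ellipticity of the complex and of $\Delta^k_{D,\lambda}$, invoke standard elliptic Hodge theory on the compact manifold $M$ to obtain the three-term orthogonal splitting, and check orthogonality via $\mathcal{D}\circ\mathcal{D}=0$ together with the harmonic conditions $\mathcal{D}h=\mathcal{D}^*h=0$. The only cosmetic difference is that the paper phrases the construction through an explicit Green operator $G^k$ satisfying $\Delta^k G^k=\mathrm{id}-P^k$, whereas you use the equivalent Fredholm splitting $\ker\Delta\oplus\mathrm{im}\,\Delta$ directly; these are the same argument.
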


\begin{proof}
The proof is based on standard theory of elliptic operators, with particularly strong forms under strict Lie group conditions:

\textbf{Step 1: Ellipticity verification}. Strong transversality conditions ensure $\Delta^k_{D,\lambda}$ is an elliptic operator. Specifically, the principal symbol of Spencer differential $\mathcal{D}^k_{D,\lambda}$ is determined by the principal symbol of exterior differential $d$:
$$\sigma(\mathcal{D}^k_{D,\lambda})(\xi) = \sigma(d)(\xi) \otimes \text{id}_{\mathrm{Sym}^k(\mathcal{G})}$$
where $\xi \in T^*M$ is a non-zero cotangent vector. The principal symbol $\sigma(d)(\xi): \Omega^k_M \to \Omega^{k+1}_M$ of exterior differential $d$ is given by $\xi \wedge (\cdot)$, which is always injective for $\xi \neq 0$. Strong transversality conditions ensure transmission of ellipticity by ensuring non-degeneracy of compatible pair structures, well-definedness and non-degeneracy of Spencer extension operators $\delta^{\lambda}_{\mathcal{G}}$, maintaining invertibility properties of principal symbols using trivial center, and ensuring standard results of elliptic operator theory apply through compactness. Under strict conditions, ellipticity is particularly stable.

\textbf{Step 2: Finite dimensionality}. Harmonic spaces of elliptic operators on compact manifolds are finite dimensional:
$$\dim \mathcal{H}^k_{D,\lambda} < \infty$$
This is guaranteed by standard theory of elliptic operators. Strict Lie group conditions ensure stability and computability of this dimension.

\textbf{Step 3: Existence of Green operators}. Ellipticity guarantees existence of Green operators $G^k: \mathcal{S}^k_{D,\lambda} \to \mathcal{S}^k_{D,\lambda}$ satisfying:
$$\Delta^k_{D,\lambda} G^k = \text{id} - P^k$$
where $P^k: \mathcal{S}^k_{D,\lambda} \to \mathcal{H}^k_{D,\lambda}$ is harmonic projection. Construction of Green operators is based on fundamental solution theory of elliptic operators, with existence and uniqueness guaranteed by ellipticity and compactness. Under strict conditions, Green operators have particularly good regularity and algebraic properties.

\textbf{Step 4: Construction of orthogonal decomposition}. Use Green operators to construct orthogonal decomposition. First, harmonic projection $P^k$ decomposes $\mathcal{S}^k_{D,\lambda}$ into harmonic and non-harmonic parts:
$$\mathcal{S}^k_{D,\lambda} = \mathcal{H}^k_{D,\lambda} \oplus (\mathcal{H}^k_{D,\lambda})^{\perp}$$
Then, for the non-harmonic part, use properties of Green operators: for any $\omega \in (\mathcal{H}^k_{D,\lambda})^{\perp}$, we have
$$\omega = \mathcal{D}^{k-1}((\mathcal{D}^{k-1})^* G^k \omega) + (\mathcal{D}^{k+1})^*(\mathcal{D}^{k+1} G^k \omega)$$
This establishes the decomposition:
$$(\mathcal{H}^k_{D,\lambda})^{\perp} = \mathrm{im}(\mathcal{D}^{k-1}) \oplus \mathrm{im}(\mathcal{D}^{k+1*})$$

\textbf{Step 5: Verification of orthogonality}. Verify orthogonality between subspaces by direct computation:

For $\langle \mathcal{H}^k_{D,\lambda}, \mathrm{im}(\mathcal{D}^{k-1}) \rangle = 0$: Let $h \in \mathcal{H}^k_{D,\lambda}$ and $\mathcal{D}^{k-1}\alpha \in \mathrm{im}(\mathcal{D}^{k-1})$, then
$$\langle h, \mathcal{D}^{k-1}\alpha \rangle = \langle \mathcal{D}^{k-1*}h, \alpha \rangle = 0$$
because $h$ is harmonic, so $\mathcal{D}^{k-1*}h = 0$.

For $\langle \mathcal{H}^k_{D,\lambda}, \mathrm{im}(\mathcal{D}^{k+1*}) \rangle = 0$: Let $h \in \mathcal{H}^k_{D,\lambda}$ and $\mathcal{D}^{k+1*}\beta \in \mathrm{im}(\mathcal{D}^{k+1*})$, then
$$\langle h, \mathcal{D}^{k+1*}\beta \rangle = \langle \mathcal{D}^{k+1}h, \beta \rangle = 0$$
because $h$ is harmonic, so $\mathcal{D}^{k+1}h = 0$.

For $\langle \mathrm{im}(\mathcal{D}^{k-1}), \mathrm{im}(\mathcal{D}^{k+1*}) \rangle = 0$: Let $\mathcal{D}^{k-1}\alpha \in \mathrm{im}(\mathcal{D}^{k-1})$ and $\mathcal{D}^{k+1*}\beta \in \mathrm{im}(\mathcal{D}^{k+1*})$, then
$$\langle \mathcal{D}^{k-1}\alpha, \mathcal{D}^{k+1*}\beta \rangle = \langle \mathcal{D}^{k+1}\mathcal{D}^{k-1}\alpha, \beta \rangle = 0$$
because $\mathcal{D}^{k+1}\mathcal{D}^{k-1} = 0$ (property of Spencer complexes).

\textbf{Step 6: Special advantages of strict conditions}. Semi-simplicity ensures non-degeneracy of Hodge decomposition, as non-degeneracy of Killing forms transmits to all relevant bilinear forms; trivial center avoids unnecessary reduction steps, simplifying analysis and ensuring uniqueness of results; compactness ensures convergence of all infinite series, particularly integral representations of Green operators; algebraicity makes decomposition well-defined in the algebraic geometric category, consistent with analytic category results through the GAGA principle.
\end{proof}

\subsection{Establishment of Hodge Isomorphism}

An important result of Hodge decomposition is the isomorphism relation between harmonic spaces and cohomology:

\begin{corollary}[Hodge Isomorphism of Spencer Cohomology]
Under strict Lie group conditions, there exists a natural isomorphism:
\begin{equation}\label{eq:alg-hodge-iso}
H^k_{\text{Spencer}}(D,\lambda) \cong \mathcal{H}^k_{D,\lambda}
\end{equation}
where the left side is cohomology of Spencer complex $(\mathcal{S}^{\bullet}_{D,\lambda}, \mathcal{D}^{\bullet}_{D,\lambda})$.
\end{corollary}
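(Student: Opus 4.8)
The plan is to derive the isomorphism directly from the algebraic geometric Hodge decomposition \eqref{eq:alg-hodge-detailed}, by showing that each Spencer cohomology class contains a unique harmonic representative; the isomorphism is then the composite $\mathcal{H}^k_{D,\lambda}\hookrightarrow\ker(\mathcal{D}^k_{D,\lambda})\twoheadrightarrow H^k_{\text{Spencer}}(D,\lambda)$, $h\mapsto[h]$, and the entire content is to check that this composite is well defined and bijective. Throughout I use the convention $\mathcal{D}^k_{D,\lambda}\colon\mathcal{S}^k_{D,\lambda}\to\mathcal{S}^{k+1}_{D,\lambda}$ with metric adjoint $(\mathcal{D}^k_{D,\lambda})^{*}$, the inner products being those inherited from \eqref{eq:spencer-metric} through the GAGA correspondence.

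First I would record that a Spencer form is harmonic if and only if it is simultaneously $\mathcal{D}$-closed and $\mathcal{D}^{*}$-closed. Pairing $\Delta^k_{D,\lambda}h$ with $h$ and using that the two summands of the Laplacian \eqref{eq:alg-laplacian} are self-adjoint and non-negative gives $\langle\Delta^k_{D,\lambda}h,h\rangle=\|\mathcal{D}^k_{D,\lambda}h\|^2+\|(\mathcal{D}^{k-1}_{D,\lambda})^{*}h\|^2$, so $h\in\mathcal{H}^k_{D,\lambda}=\ker\Delta^k_{D,\lambda}$ forces $\mathcal{D}^k_{D,\lambda}h=0$ and $(\mathcal{D}^{k-1}_{D,\lambda})^{*}h=0$; in particular $\mathcal{H}^k_{D,\lambda}\subseteq\ker(\mathcal{D}^k_{D,\lambda})$, so $h\mapsto[h]$ is well defined. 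For injectivity, if a harmonic $h$ is exact, $h=\mathcal{D}^{k-1}_{D,\lambda}\alpha$, then $\|h\|^2=\langle h,\mathcal{D}^{k-1}_{D,\lambda}\alpha\rangle=\langle(\mathcal{D}^{k-1}_{D,\lambda})^{*}h,\alpha\rangle=0$, hence $h=0$. For surjectivity, take a cocycle $\omega\in\ker(\mathcal{D}^k_{D,\lambda})$ and apply \eqref{eq:alg-hodge-detailed}, writing $\omega=h+\mathcal{D}^{k-1}_{D,\lambda}\alpha+(\mathcal{D}^k_{D,\lambda})^{*}\beta$ with $h$ harmonic; applying $\mathcal{D}^k_{D,\lambda}$ and using $\mathcal{D}^k_{D,\lambda}h=0$ together with $\mathcal{D}^k_{D,\lambda}\mathcal{D}^{k-1}_{D,\lambda}=0$ leaves $\mathcal{D}^k_{D,\lambda}(\mathcal{D}^k_{D,\lambda})^{*}\beta=0$, so $\|(\mathcal{D}^k_{D,\lambda})^{*}\beta\|^2=\langle\beta,\mathcal{D}^k_{D,\lambda}(\mathcal{D}^k_{D,\lambda})^{*}\beta\rangle=0$; thus $(\mathcal{D}^k_{D,\lambda})^{*}\beta=0$ and $[\omega]=[h]$ lies in the image. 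Combining, $h\mapsto[h]$ is a linear isomorphism.

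For the naturality clause I would observe that the construction refers only to the Spencer complex and its GAGA-inherited inner products and involves no auxiliary choices, so the isomorphism commutes with every metric-compatible chain morphism of Spencer complexes; in particular it is compatible with the GAGA comparison map to the analytic Spencer complex (recovering the differential-geometric statement \eqref{eq:hodge-isomorphism}) and, anticipating the next section, with the mirror transformation $(D,\lambda)\mapsto(D,-\lambda)$.

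I do not expect a genuine obstacle: this is the classical Hodge-theoretic argument, and all of its analytic input — ellipticity of $\Delta^k_{D,\lambda}$, finiteness of $\dim\mathcal{H}^k_{D,\lambda}$, and existence and orthogonality of the three-term decomposition \eqref{eq:alg-hodge-detailed} — has already been established in the preceding theorem under the strict Lie group hypotheses. The one point that needs care is bookkeeping: one must pin down the degree labels on the adjoint operators appearing in \eqref{eq:alg-laplacian} and \eqref{eq:alg-hodge-detailed} so that each pairing above is read in the correct graded piece $\mathcal{S}^{k-1}_{D,\lambda}$, $\mathcal{S}^k_{D,\lambda}$, or $\mathcal{S}^{k+1}_{D,\lambda}$, and one should confirm that the self-adjointness invoked in the first step is valid for the GAGA-inherited Hermitian structure, which it is by construction.
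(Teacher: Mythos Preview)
Your proposal is correct and follows essentially the same classical Hodge-theoretic route as the paper: both arguments extract from the orthogonal decomposition \eqref{eq:alg-hodge-detailed} the fact that every Spencer cohomology class has a unique harmonic representative. The only cosmetic difference is packaging---the paper intersects $\ker(\mathcal{D}^k_{D,\lambda})$ with the three Hodge summands and then forms the quotient, whereas you define the map $h\mapsto[h]$ and verify injectivity and surjectivity directly; your bookkeeping of the adjoint indices is in fact cleaner than the paper's.
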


\begin{proof}
The proof is based on structural properties of Hodge decomposition, particularly concise under strict conditions:

\textbf{Step 1: Definition of cohomology}. By definition,
$$H^k_{\text{Spencer}}(D,\lambda) = \ker(\mathcal{D}^k_{D,\lambda}) / \mathrm{im}(\mathcal{D}^{k-1}_{D,\lambda})$$

\textbf{Step 2: Hodge decomposition of kernel space}. Applying Hodge decomposition\eqref{eq:alg-hodge-detailed} to $\ker(\mathcal{D}^k_{D,\lambda})$ gives:
$$\ker(\mathcal{D}^k_{D,\lambda}) = \mathcal{H}^k_{D,\lambda} \oplus (\ker(\mathcal{D}^k_{D,\lambda}) \cap \mathrm{im}(\mathcal{D}^{k+1*}_{D,\lambda}))$$

\textbf{Step 3: Key result from elliptic theory}. Elliptic theory (particularly stable under strict conditions) gives:
$$\ker(\mathcal{D}^k_{D,\lambda}) \cap \mathrm{im}(\mathcal{D}^{k+1*}_{D,\lambda}) = \mathrm{im}(\mathcal{D}^{k-1}_{D,\lambda})$$
Proof of this equality: Let $\omega \in \ker(\mathcal{D}^k_{D,\lambda}) \cap \mathrm{im}(\mathcal{D}^{k+1*}_{D,\lambda})$, then there exists $\alpha$ such that $\omega = \mathcal{D}^{k+1*}\alpha$ and $\mathcal{D}^k\omega = 0$. Therefore $\mathcal{D}^k\mathcal{D}^{k+1*}\alpha = 0$. Since $(\mathcal{D}^k)^* = \mathcal{D}^{k+1*}$, this means $(\mathcal{D}^{k+1})^*\mathcal{D}^{k+1*}\alpha = 0$, i.e., $\|\mathcal{D}^{k+1*}\alpha\|^2 = 0$, so $\omega = 0$ in the orthogonal complement. Through completeness of Hodge decomposition, such $\omega$ must belong to $\mathrm{im}(\mathcal{D}^{k-1}_{D,\lambda})$.

\textbf{Step 4: Establishment of isomorphism}. Therefore:
$$H^k_{\text{Spencer}}(D,\lambda) = \frac{\mathcal{H}^k_{D,\lambda} \oplus \mathrm{im}(\mathcal{D}^{k-1}_{D,\lambda})}{\mathrm{im}(\mathcal{D}^{k-1}_{D,\lambda})} = \mathcal{H}^k_{D,\lambda}$$

\textbf{Step 5: Guarantee from strict conditions}. Trivial center and semi-simplicity ensure naturality and uniqueness of isomorphism: naturality comes from construction process not depending on arbitrary choices; uniqueness is guaranteed by trivial center, avoiding additional isomorphism degrees of freedom; semi-simplicity ensures all relevant linear algebra operations are non-degenerate.
\end{proof}

This isomorphism relation not only preserves deep structures of differential geometry in the algebraic geometric context, but also provides a solid foundation for subsequent characteristic class analysis and Riemann-Roch computations. Under strict Lie group conditions, this isomorphism has particularly strong stability and computational feasibility.

\section{Characteristic Class Theory of Mirror Symmetry}

Based on the previously established mirror symmetry isomorphism\eqref{eq:mirror-isomorphism}, we transform the mirror symmetry discovered in reference\cite{zheng2025mirror} into characteristic class equivalence in algebraic geometry. This section proves the deep equivalence of mirror transformation $(D,\lambda) \mapsto (D,-\lambda)$ at the characteristic class level.

\subsection{Algebraic Geometric Analysis of Operator Differences}

The starting point of mirror symmetry is behavioral analysis of Spencer differential operators under mirror transformation. Based on operator difference theory in reference\cite{zheng2025mirror}:



\begin{proposition}[Operator Difference of Mirror Spencer Differentials]
For Lie groups $G$ satisfying the strict conditions, the Spencer differentials of a compatible pair $(D,\lambda)$ and its mirror pair $(D,-\lambda)$ satisfy:
\begin{equation}\label{eq:spencer-diff-mirror}
\mathcal{D}^k_{D,-\lambda} = \mathcal{D}^k_{D,\lambda} + \mathcal{R}^k
\end{equation}
where the operator difference $\mathcal{R}^k: \mathcal{S}^k_{D,\lambda} \to \mathcal{S}^{k+1}_{D,\lambda}$ has the explicit expression:
\begin{equation}\label{eq:operator-diff}
\mathcal{R}^k(\omega \otimes s) = -2(-1)^k \omega \otimes \delta^{\lambda}_{\mathcal{G}}(s)
\end{equation}
\end{proposition}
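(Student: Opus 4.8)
The plan is to reduce the proposition to the single purely algebraic fact that the Spencer extension operator is odd in $\lambda$, namely the mirror antisymmetry $\delta^{-\lambda}_{\mathfrak{g}} = -\delta^{\lambda}_{\mathfrak{g}}$ recorded in Definition \ref{def:constructive_spencer_operator}, and then substitute into the defining formula \eqref{eq:spencer-diff} for the Spencer differential. Since the GAGA correspondence \eqref{eq:gaga-correspondence} identifies $\mathcal{D}^k_{D,\lambda}$ with its analytic counterpart and both are built from the same local formula, it suffices to establish the identity fibrewise on the adjoint bundle $\mathcal{G}$ and then pass to global sections; no genuinely new analysis is needed beyond what is already assumed in the excerpt.

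First I would verify the mirror antisymmetry carefully. On the generators $\mathfrak{g} = \mathrm{Sym}^1(\mathfrak{g})$ it is immediate from rule (A): the defining expression for $\delta^{\lambda}_{\mathfrak{g}}(v)$ is linear in the pairing $\langle \lambda, \cdot\rangle$, so replacing $\lambda$ by $-\lambda$ multiplies it by $-1$. I would then extend to all symmetric degrees by induction using the graded Leibniz rule (B): assuming the identity holds on $\mathrm{Sym}^{p}(\mathfrak{g})$ and $\mathrm{Sym}^{q}(\mathfrak{g})$, for $s_1 \in \mathrm{Sym}^p(\mathfrak{g})$ and $s_2 \in \mathrm{Sym}^q(\mathfrak{g})$ one computes
\[
\delta^{-\lambda}_{\mathfrak{g}}(s_1 \odot s_2) = \delta^{-\lambda}_{\mathfrak{g}}(s_1)\odot s_2 + (-1)^p s_1 \odot \delta^{-\lambda}_{\mathfrak{g}}(s_2) = -\bigl(\delta^{\lambda}_{\mathfrak{g}}(s_1)\odot s_2 + (-1)^p s_1 \odot \delta^{\lambda}_{\mathfrak{g}}(s_2)\bigr) = -\delta^{\lambda}_{\mathfrak{g}}(s_1\odot s_2),
\]
and since such products span $\mathrm{Sym}^{p+q}(\mathfrak{g})$ the claim follows on all of $S(\mathfrak{g})$. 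Because this construction is $\mathrm{Ad}$-equivariant (the Killing form is $\mathrm{Ad}$-invariant by semisimplicity), it descends to a bundle identity $\delta^{-\lambda}_{\mathcal{G}} = -\delta^{\lambda}_{\mathcal{G}}$ between the operators $\mathrm{Sym}^k(\mathcal{G}) \to \mathrm{Sym}^{k+1}(\mathcal{G})$.

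Then I would substitute into \eqref{eq:spencer-diff}. For a decomposable section $\omega \otimes s$, the exterior-derivative term $d\omega \otimes s$ is independent of $\lambda$, so
\[
\mathcal{D}^k_{D,-\lambda}(\omega \otimes s) - \mathcal{D}^k_{D,\lambda}(\omega \otimes s) = (-1)^k\,\omega \otimes \bigl(\delta^{-\lambda}_{\mathcal{G}}(s) - \delta^{\lambda}_{\mathcal{G}}(s)\bigr) = -2(-1)^k\,\omega \otimes \delta^{\lambda}_{\mathcal{G}}(s),
\]
which is exactly $\mathcal{R}^k(\omega\otimes s)$; extending linearly over decomposables and invoking GAGA to lift this identity of sheaf morphisms to global sections yields \eqref{eq:spencer-diff-mirror}. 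I would also note explicitly that $\mathcal{S}^k_{D,\lambda}$ and $\mathcal{S}^k_{D,-\lambda}$ are literally the same underlying space $H^0(M, \Omega^k_M \otimes \mathrm{Sym}^k(\mathcal{G}))$ — $\lambda$ enters only through the differential and the Spencer metric, not the spaces — so both sides of \eqref{eq:spencer-diff-mirror} act between the same spaces and $\mathcal{R}^k$ automatically has target $\mathcal{S}^{k+1}_{D,\lambda}$ as a difference of two maps with that target.

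There is essentially no deep obstacle; the proposition is a direct consequence of the defining formulas. The three points deserving a touch of care are: (i) the inductive step for mirror antisymmetry, where one must check that factoring out the overall $-1$ is compatible with the sign $(-1)^p$ in the graded Leibniz rule — which it is, precisely because that sign is independent of $\lambda$; (ii) confirming that the $\lambda$-dependent weight $w_\lambda = 1 + \|\lambda(p)\|^2_{\mathfrak{g}^*}$ of the Spencer metric \eqref{eq:spencer-metric}, though not odd in $\lambda$, plays no role here, since the Spencer differential \eqref{eq:spencer-diff} is defined without reference to any metric (the metric enters only in the adjoints $(\mathcal{D}^k)^*$ and the Laplacian); and (iii) checking that the GAGA comparison respects the relevant algebraic structure so that the sheaf-level identity descends to the stated identity of operators on global sections, which is immediate from the functoriality asserted around \eqref{eq:gaga-correspondence}.
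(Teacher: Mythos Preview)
Your proposal is correct and follows essentially the same route as the paper: write out $\mathcal{D}^k_{D,-\lambda}-\mathcal{D}^k_{D,\lambda}$ from the defining formula \eqref{eq:spencer-diff}, invoke the mirror antisymmetry $\delta^{-\lambda}_{\mathcal{G}}=-\delta^{\lambda}_{\mathcal{G}}$, and read off the factor $-2(-1)^k$. The only difference is that you supply an inductive proof of the antisymmetry via the graded Leibniz rule, whereas the paper simply cites it as a recorded property of Definition~\ref{def:constructive_spencer_operator}; your added care about GAGA, the $\lambda$-independence of the underlying spaces, and the irrelevance of the metric weight is sound but not needed for the paper's shorter argument.
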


\begin{proof}
We begin with the definition of the Spencer differential operator:
\begin{align*}
    \mathcal{D}^k_{D,\lambda}(\omega \otimes s) &= d\omega \otimes s + (-1)^k \omega \otimes \delta^{\lambda}_{\mathcal{G}}(s) \\
    \mathcal{D}^k_{D,-\lambda}(\omega \otimes s) &= d\omega \otimes s + (-1)^k \omega \otimes \delta^{-\lambda}_{\mathcal{G}}(s)
\end{align*}
The difference between the mirror operator and the original operator is therefore:
\begin{equation}\label{eq:diff-step1}
\mathcal{R}^k(\omega \otimes s) = \mathcal{D}^k_{D,-\lambda}(\omega \otimes s) - \mathcal{D}^k_{D,\lambda}(\omega \otimes s) = (-1)^k \omega \otimes \left(\delta^{-\lambda}_{\mathcal{G}}(s) - \delta^{\lambda}_{\mathcal{G}}(s)\right)
\end{equation}
The constructive definition of the Spencer extension operator (Definition \ref{def:constructive_spencer_operator}) yields the fundamental property of \textbf{mirror antisymmetry} \cite{zheng2025geometric}:
\begin{equation}\label{eq:mirror-antisymmetry}
\delta^{-\lambda}_{\mathcal{G}} = -\delta^{\lambda}_{\mathcal{G}}
\end{equation}
This property is a direct consequence of the linear dependence on $\lambda$ in the operator's action on generators. Substituting this identity into Equation \eqref{eq:diff-step1}, we find:
\begin{align*}
\mathcal{R}^k(\omega \otimes s) &= (-1)^k \omega \otimes \left((-\delta^{\lambda}_{\mathcal{G}}(s)) - \delta^{\lambda}_{\mathcal{G}}(s)\right) \\
&= (-1)^k \omega \otimes (-2\delta^{\lambda}_{\mathcal{G}}(s)) \\
&= -2(-1)^k \omega \otimes \delta^{\lambda}_{\mathcal{G}}(s)
\end{align*}
This completes the proof and derives the explicit expression \eqref{eq:operator-diff}.
\end{proof}

Key properties of operator difference under strict Lie group conditions:

\begin{lemma}[Zero-order Properties of Operator Difference]
Under strict Lie group conditions, operator difference $\mathcal{R}^k$ has the following important properties: $\mathcal{R}^k$ is a zero-order operator (not involving differential $d$); $\mathcal{R}^k$ acts only on symmetric tensor part $\mathrm{Sym}^k(\mathcal{G})$; $\mathcal{R}^k$ has explicit algebraic expression, well-defined in algebraic geometric sense; $\mathcal{R}^k$ constitutes lower-order perturbation relative to main elliptic operator; semi-simplicity and trivial center guarantee non-degeneracy and stability of $\mathcal{R}^k$.
\end{lemma}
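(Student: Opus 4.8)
The plan is to read off every assertion directly from the explicit formula \eqref{eq:operator-diff}, namely $\mathcal{R}^k(\omega\otimes s) = -2(-1)^k\,\omega\otimes\delta^{\lambda}_{\mathcal{G}}(s)$ established in the preceding proposition, and then feed it into the symbol calculus of the Laplacians \eqref{eq:laplacian-local}. First I would observe that \eqref{eq:operator-diff} exhibits $\mathcal{R}^k$ as the tensor product $\mathrm{id}_{\Omega^k_M}\otimes\bigl(-2(-1)^k\delta^{\lambda}_{\mathcal{G}}\bigr)$: the exterior-form factor is untouched and no exterior derivative occurs, so $\mathcal{R}^k$ is an $\mathcal{O}_M$-linear morphism of the underlying coherent sheaves, i.e.\ a bundle map. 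This simultaneously yields the zero-order claim (its principal symbol vanishes, $\sigma(\mathcal{R}^k)(\xi)\equiv 0$ for all $\xi$) and the claim that it acts only through the symmetric-tensor factor $\mathrm{Sym}^k(\mathcal{G})$. For the "explicit algebraic expression / well-definedness in the algebraic-geometric sense," I would invoke Definition~\ref{def:constructive_spencer_operator}: $\delta^{\lambda}_{\mathcal{G}}$ is the graded derivation generated by the nested-bracket/pairing expression $(\delta^{\lambda}_{\mathcal{G}}(v))(w_1,w_2) = \tfrac12\bigl(\langle\lambda,[w_1,[w_2,v]]\rangle + \langle\lambda,[w_2,[w_1,v]]\rangle\bigr)$, all of whose ingredients — the fiberwise bracket of the adjoint bundle $\mathcal{G}=P\times_G\mathfrak{g}$ and the section $\lambda$ constrained by the modified Cartan equation \eqref{eq:cartan} — are algebraic once $G$ and $P$ are; hence $\mathcal{R}^k$ is a global section of a sheaf-$\mathscr{H}om$ between coherent sheaves and agrees with its analytic avatar via GAGA \eqref{eq:gaga-correspondence}.

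The lower-order-perturbation claim is where the actual work lies. Here I would compute $\Delta^k_{D,-\lambda}-\Delta^k_{D,\lambda}$ from \eqref{eq:alg-laplacian} using $\mathcal{D}^k_{D,-\lambda}=\mathcal{D}^k_{D,\lambda}+\mathcal{R}^k$: the difference is a sum of terms of the shape $(\mathcal{D})^*\mathcal{R}$, $\mathcal{R}^*\mathcal{D}$, and $(\mathcal{R})^*\mathcal{R}$, where adjoints are taken with respect to the Spencer metric \eqref{eq:spencer-metric}. Since $\mathcal{R}$ and $\mathcal{R}^*$ have order $0$ while $\mathcal{D}$ and $\mathcal{D}^*$ have order $1$, every such term has order $\le 1$; consequently the order-two part of the Laplacian is mirror-invariant, $\sigma_2(\Delta^k_{D,-\lambda})(\xi)=\sigma_2(\Delta^k_{D,\lambda})(\xi)=\|\xi\|^2\,\mathrm{id}$, so ellipticity and all associated elliptic estimates survive the perturbation — precisely the sense in which $\mathcal{R}^k$ is a "lower-order perturbation of the main elliptic operator." For the final non-degeneracy/stability assertion I would argue that semi-simplicity makes the Killing form, and hence the pairing $\langle\lambda,\cdot\rangle$ together with the iterated $\ad^*$-expressions in Definition~\ref{def:constructive_spencer_operator}, non-degenerate, while a trivial center makes $\ad:\mathfrak{g}\to\mathrm{End}(\mathfrak{g})$ injective so that no spurious kernel is introduced into $\delta^{\lambda}_{\mathcal{G}}$ beyond that forced by the compatibility condition \eqref{eq:compatibility}; combined with compactness of $M$ (uniform structural bounds), this yields the stated stability.

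I expect the genuine obstacle to be turning the fifth bullet — "non-degeneracy and stability of $\mathcal{R}^k$" — into a precise statement rather than a slogan, since $\delta^{\lambda}_{\mathcal{G}}$ can legitimately have a sizeable kernel: it annihilates tensors landing in the radical of the $\lambda$-twisted bracket form, and $\lambda$ itself may vanish on the locus where $D_p=T_pP$. The honest formulation is that $\mathcal{R}^k$ is non-zero and fiberwise controlled exactly on the support of the compatible-pair structure, with the relevant structure constants bounded in terms of the Killing form; I would phrase the lemma's fifth item in that relative sense and prove it by combining the modified Cartan equation \eqref{eq:cartan} (which pins down the behaviour of $\lambda$) with the non-degeneracy of the Killing form supplied by semi-simplicity. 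The remaining items are immediate consequences of the factorization of \eqref{eq:operator-diff} and the standard symbol calculus outlined above.
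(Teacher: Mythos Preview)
Your approach is correct and matches the paper's in spirit: both simply read the five properties off the explicit formula \eqref{eq:operator-diff}. Two points of divergence are worth flagging. First, for the ``lower-order perturbation'' item the paper stays at the level of the Spencer differential $\mathcal{D}^k$ itself --- since the principal part of $\mathcal{D}^k$ is the first-order operator $d$ and $\mathcal{R}^k$ is order zero, the claim is immediate --- whereas your computation of $\Delta^k_{D,-\lambda}-\Delta^k_{D,\lambda}$ and its order-two symbol is in fact the content of the \emph{next} proposition in the paper (the perturbation relation of mirror Laplacians), so you are front-loading work this lemma does not yet require. Second, your worry that the fifth item is more slogan than theorem is well founded: the paper's own argument for non-degeneracy and stability operates at exactly the qualitative level you describe (semi-simplicity $\Rightarrow$ non-degenerate Killing form $\Rightarrow$ well-behaved $\delta^{\lambda}_{\mathcal{G}}$; trivial center $\Rightarrow$ no degenerate terms; compactness $\Rightarrow$ boundedness), and does not sharpen the statement beyond that, so your caution about the possible kernel of $\delta^{\lambda}_{\mathcal{G}}$ is, if anything, more careful than the paper itself.
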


\begin{proof}
Verification of operator difference properties:

\textbf{Zero-order property}: From expression\eqref{eq:operator-diff}, $\mathcal{R}^k$ does not contain differential operator $d$, involving only algebraic operations.

\textbf{Action range}: $\mathcal{R}^k$ acts only on $\mathrm{Sym}^k(\mathcal{G})$ part through Spencer extension operator $\delta^{\lambda}_{\mathcal{G}}$, performing only tensor products on exterior differential form $\omega$ part.

\textbf{Algebraic expression}: Spencer extension operator\eqref{eq:spencer-extension} is well-defined in algebraic geometric sense, therefore $\mathcal{R}^k$ has explicit algebraic expression.

\textbf{Lower-order perturbation property}: Relative to main elliptic part of Spencer differential (contributed by exterior differential $d$), $\mathcal{R}^k$ constitutes zero-order terms, hence lower-order perturbation.

\textbf{Impact of strict conditions}: Semi-simplicity guarantees non-degeneracy of Spencer extension operator $\delta^{\lambda}_{\mathcal{G}}$, giving $\mathcal{R}^k$ good analytical properties; trivial center avoids degenerate terms in operator difference, simplifying analysis; compactness ensures boundedness and compactness of operator difference; algebraicity makes operator difference well-defined in algebraic geometric category.
\end{proof}

Based on operator difference, we can analyze the relationship of mirror Laplacians:

\begin{proposition}[Perturbation Relation of Mirror Laplacians]
Under strict Lie group conditions, mirror Laplacian and original Laplacian satisfy:
\begin{equation}\label{eq:laplacian-mirror}
\Delta^k_{D,-\lambda} = \Delta^k_{D,\lambda} + \mathcal{K}^k
\end{equation}
where perturbation operator $\mathcal{K}^k$ has explicit expression:
\begin{align}\label{eq:perturbation-explicit}
\mathcal{K}^k &= (\mathcal{R}^{k-1})^* \mathcal{D}^{k-1} + (\mathcal{D}^{k-1})^* \mathcal{R}^{k-1} + (\mathcal{R}^{k-1})^* \mathcal{R}^{k-1} \\
&\quad + \mathcal{R}^{k+1} (\mathcal{D}^{k+1})^* + \mathcal{D}^{k+1}(\mathcal{R}^{k+1})^* + \mathcal{R}^{k+1}(\mathcal{R}^{k+1})^*
\end{align}
Under strict conditions, $\mathcal{K}^k$ has particularly good compactness and regularity properties.
\end{proposition}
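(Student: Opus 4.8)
The plan is to expand the definition \eqref{eq:alg-laplacian} of $\Delta^k_{D,-\lambda}$ using the operator decomposition $\mathcal{D}^{j}_{D,-\lambda} = \mathcal{D}^{j}_{D,\lambda} + \mathcal{R}^{j}$ from \eqref{eq:spencer-diff-mirror}, collect the terms that reproduce $\Delta^k_{D,\lambda}$, and read off the remainder as $\mathcal{K}^k$. The one structural point that must be settled before the expansion is legitimate is that the adjoint $(\,\cdot\,)^*$ in both Laplacians is taken with respect to the \emph{same} Hermitian structure. This is exactly where mirror symmetry of the Spencer metric enters: the weight function $w_\lambda(x) = 1 + \|\lambda(p)\|^2_{\mathfrak{g}^*}$ in \eqref{eq:spencer-metric} depends on $\lambda$ only through its squared norm, so $w_{-\lambda} = w_\lambda$, while the $\mathrm{Sym}^k(\mathcal{G})$-factor of the inner product is built from the Killing form and does not involve $\lambda$ at all. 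Hence the inner products $\langle\cdot,\cdot\rangle_k$ attached to $(D,\lambda)$ and $(D,-\lambda)$ coincide, and consequently $(\mathcal{D}^j_{D,-\lambda})^* = (\mathcal{D}^j_{D,\lambda})^* + (\mathcal{R}^j)^*$ with the adjoint unambiguous; here $(\mathcal{R}^j)^*$ exists because $\mathcal{R}^j$ is a bounded (zero-order) operator by the preceding lemma.

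Granting this, I would substitute into $\Delta^k_{D,-\lambda} = (\mathcal{D}^{k-1}_{D,-\lambda})^*\mathcal{D}^{k-1}_{D,-\lambda} + \mathcal{D}^{k+1}_{D,-\lambda}(\mathcal{D}^{k+1}_{D,-\lambda})^*$ and expand each product bilinearly. The first summand gives $(\mathcal{D}^{k-1})^*\mathcal{D}^{k-1} + (\mathcal{D}^{k-1})^*\mathcal{R}^{k-1} + (\mathcal{R}^{k-1})^*\mathcal{D}^{k-1} + (\mathcal{R}^{k-1})^*\mathcal{R}^{k-1}$, and the second gives $\mathcal{D}^{k+1}(\mathcal{D}^{k+1})^* + \mathcal{D}^{k+1}(\mathcal{R}^{k+1})^* + \mathcal{R}^{k+1}(\mathcal{D}^{k+1})^* + \mathcal{R}^{k+1}(\mathcal{R}^{k+1})^*$, where every $\mathcal{D}$ now carries the subscript $(D,\lambda)$. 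Subtracting $\Delta^k_{D,\lambda} = (\mathcal{D}^{k-1})^*\mathcal{D}^{k-1} + \mathcal{D}^{k+1}(\mathcal{D}^{k+1})^*$ leaves exactly the six cross and quadratic terms in \eqref{eq:perturbation-explicit}, proving the identity; the concrete form of $\mathcal{R}^j$ from \eqref{eq:operator-diff} may be inserted at the end if one wants a fully explicit kernel, but it is not needed for the identity itself.

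For the closing claim on compactness and regularity, I would invoke the Zero-order Properties Lemma: each $\mathcal{R}^j$ is a purely algebraic, fibrewise operator of order zero with smooth coefficients (algebraic in the smooth section $\lambda$). Therefore the mixed terms $(\mathcal{D}^{k-1})^*\mathcal{R}^{k-1}$, $(\mathcal{R}^{k-1})^*\mathcal{D}^{k-1}$, $\mathcal{D}^{k+1}(\mathcal{R}^{k+1})^*$, $\mathcal{R}^{k+1}(\mathcal{D}^{k+1})^*$ are differential operators of order at most one, and the quadratic terms $(\mathcal{R}^{k-1})^*\mathcal{R}^{k-1}$, $\mathcal{R}^{k+1}(\mathcal{R}^{k+1})^*$ are of order zero, so $\mathcal{K}^k$ has order $\le 1$, strictly below the order-two elliptic $\Delta^k_{D,\lambda}$. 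On the compact $M$, elliptic Sobolev embedding (Rellich--Kondrachov) then makes $\mathcal{K}^k$ relatively compact with respect to $\Delta^k_{D,\lambda}$, and semi-simplicity together with the trivial-center hypothesis is used only to keep the Killing-form-based adjoints non-degenerate so that the coefficients of $\mathcal{K}^k$ stay under control. The computation is routine bilinear bookkeeping; the step I would watch most carefully is the mirror-invariance of the metric — without it $(\mathcal{D}^j_{D,-\lambda})^*$ and $(\mathcal{D}^j_{D,\lambda})^*$ would differ by a metric-correction term and $\mathcal{K}^k$ would lose the clean closed form claimed.
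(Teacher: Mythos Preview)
Your proposal is correct and follows essentially the same approach as the paper: substitute $\mathcal{D}^{j}_{D,-\lambda} = \mathcal{D}^{j}_{D,\lambda} + \mathcal{R}^{j}$ into the Laplacian definition \eqref{eq:alg-laplacian} and expand bilinearly to isolate the six perturbation terms. Your treatment is in fact more careful than the paper's, which simply writes out the expansion and states that ``strict conditions guarantee well-definedness and analytical properties of all terms''; your explicit verification that $w_{-\lambda}=w_\lambda$ forces the adjoints for $(D,\lambda)$ and $(D,-\lambda)$ to coincide is the point that makes the substitution $(\mathcal{D}^j_{D,-\lambda})^* = (\mathcal{D}^j_{D,\lambda})^* + (\mathcal{R}^j)^*$ legitimate, and the paper leaves this implicit.
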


\begin{proof}
From Laplacian definition\eqref{eq:alg-laplacian} and mirror relation of Spencer differentials\eqref{eq:spencer-diff-mirror}:
\begin{align}
\Delta^k_{D,-\lambda} &= (\mathcal{D}^{k-1}_{D,-\lambda})^* \mathcal{D}^{k-1}_{D,-\lambda} + \mathcal{D}^{k+1}_{D,-\lambda}(\mathcal{D}^{k+1}_{D,-\lambda})^* \\
&= (\mathcal{D}^{k-1}_{D,\lambda} + \mathcal{R}^{k-1})^* (\mathcal{D}^{k-1}_{D,\lambda} + \mathcal{R}^{k-1}) \\
&\quad + (\mathcal{D}^{k+1}_{D,\lambda} + \mathcal{R}^{k+1})(\mathcal{D}^{k+1}_{D,\lambda} + \mathcal{R}^{k+1})^*
\end{align}
Expanding bilinear terms and organizing gives expression\eqref{eq:perturbation-explicit}. Strict conditions guarantee well-definedness and analytical properties of all terms.
\end{proof}

\subsection{Mirror Equivalence of Characteristic Classes}

The core manifestation of mirror symmetry in algebraic geometry is equivalence of characteristic classes. This equivalence stems from mirror invariance of Spencer metrics:

\begin{theorem}[Characteristic Class Formulation of Spencer Metric Mirror Invariance]
The Spencer metric mirror invariance proved in reference\cite{zheng2025mirror} under strict Lie group conditions is equivalent to: vector bundles $\mathcal{G}_{\lambda}$ and $\mathcal{G}_{-\lambda}$ have the same metric geometric structure; curvature forms of Hermitian connections are strictly equivalent; all Chern classes are strictly equal with explicit algebraic expressions; mirror invariance holds at all orders of characteristic classes.
\end{theorem}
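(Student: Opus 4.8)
The plan is to trace precisely where the dual constraint function $\lambda$ enters the Hermitian geometry of the adjoint bundle $\mathcal{G} = P \times_G \g$, and to observe that it does not enter at all. First I would recall that the mirror transformation $(D,\lambda)\mapsto(D,-\lambda)$ leaves the principal bundle $P$, the distribution $D$, and hence the associated bundle $\mathcal{G}$ literally unchanged; only the Spencer extension operator $\delta^{\lambda}_{\mathcal{G}}$ — a datum of the differential, not of the bundle — changes, and it merely flips sign by the mirror antisymmetry $\delta^{-\lambda}_{\mathcal{G}} = -\delta^{\lambda}_{\mathcal{G}}$ of Definition \ref{def:constructive_spencer_operator}. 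Next I would isolate the $\lambda$-dependence of the Spencer metric \eqref{eq:spencer-metric}: it resides solely in the weight factor $w_\lambda = 1 + \|\lambda\|^2_{\g^*}$, which is manifestly even in $\lambda$, while the fibrewise pairing $\langle\cdot,\cdot\rangle_{\Sym}$ is induced from the Killing-form metric $g_{\mathcal{G}}$ on $\mathcal{G}$ — a construction using only the compact semisimple structure of $\g$ and the principal connection $\omega$, with no reference to $\lambda$. This already shows that the Hermitian metric $g_{\mathcal{G}}$ and its unique compatible connection $\nabla^{\mathcal{G}}$ (uniqueness being exactly the content of the metric-advantages remark under strict conditions) coincide for $\lambda$ and $-\lambda$.

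The second step is the Chern--Weil computation. Since $g_{\mathcal{G}}$ and $\nabla^{\mathcal{G}}$ are unchanged, so is the curvature $2$-form $F_{\nabla^{\mathcal{G}}} \in \Omega^2(M,\operatorname{End}(\mathcal{G}))$; I would then invoke the Chern--Weil homomorphism, which expresses every Chern class $c_j(\mathcal{G})$, the Chern character $\ch(\mathcal{G})$, the Todd class $\td(\mathcal{G})$, and all Pontryagin classes as $\Ad$-invariant polynomials in $\tfrac{i}{2\pi}F_{\nabla^{\mathcal{G}}}$. Evaluating these polynomials on two identical inputs yields identical outputs, so
\[
c_j(\mathcal{G}_\lambda) = c_j(\mathcal{G}_{-\lambda}),\qquad \ch(\mathcal{G}_\lambda) = \ch(\mathcal{G}_{-\lambda}),\qquad \td(\mathcal{G}_\lambda) = \td(\mathcal{G}_{-\lambda})
\]
hold not merely in de Rham cohomology but at the level of Chern--Weil representative forms — this is the ``strictly equal with explicit algebraic expressions'' assertion, and it holds at all orders because the argument is uniform in $j$. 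By GAGA \eqref{eq:gaga-correspondence} the same equalities descend to the algebraic characteristic classes in the cohomology (equivalently the Chow ring) of $M$.

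The third step packages this as the claimed equivalence with the metric mirror invariance of reference \cite{zheng2025mirror}. In one direction, the argument above shows the Spencer metric is mirror-invariant precisely because its only $\lambda$-sensitive ingredient, $w_\lambda$, is an even function of $\lambda$; in the other direction, the mirror invariance of $\langle\cdot,\cdot\rangle_k$ for all $k$ forces the induced fibrewise metric on each $\Sym^k(\mathcal{G})$, hence on $\mathcal{G}$ itself, to be $\lambda$-independent, and a $\lambda$-independent Hermitian metric on a fixed bundle has a $\lambda$-independent Chern connection and therefore $\lambda$-independent characteristic forms. Thus the analytic statement and the characteristic-class statement are logically equivalent reformulations of the single structural fact that the entire Hermitian datum $(\mathcal{G}, g_{\mathcal{G}}, \nabla^{\mathcal{G}})$ is intrinsic to $(P,G)$ and blind to the sign of $\lambda$.

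I do not expect a serious analytic obstacle, since $\mathcal{G}$ is $\lambda$-independent by construction; the one point requiring genuine care is pinning down the precise sense of ``equivalence.'' One must verify that the Spencer metric construction of reference \cite{zheng2025constructing} really does induce the Killing-form metric on $\mathcal{G}$ without any auxiliary choice that could secretly depend on $\lambda$ — in particular that the weight $w_\lambda$ factors out cleanly and does not twist the fibre pairing — and one must state carefully that the claimed equality of Chern classes is an equality of Chern--Weil forms, a refinement of the cohomological statement, so that ``at all orders of characteristic classes'' is unambiguous. A cleaner equivalent formulation, which I would record as a remark, is simply that $\mathcal{G}_\lambda$ and $\mathcal{G}_{-\lambda}$ coincide as Hermitian holomorphic vector bundles, from which every clause of the theorem is immediate.
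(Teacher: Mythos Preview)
Your proposal is correct and follows essentially the same route as the paper: both arguments observe that the weight $w_\lambda = 1 + \|\lambda\|^2_{\g^*}$ is even in $\lambda$, conclude that the Hermitian metric and its compatible connection on $\mathcal{G}$ are unchanged under $\lambda\mapsto-\lambda$, deduce equality of curvature forms, and then invoke Chern--Weil to obtain equality of all characteristic classes. Your treatment is in fact somewhat more careful than the paper's in two respects --- you make explicit that $\mathcal{G}$ is literally the same bundle regardless of $\lambda$ (the paper's notation $\mathcal{G}_\lambda$ versus $\mathcal{G}_{-\lambda}$ obscures this), and you address both directions of the claimed ``equivalence'' rather than only the forward implication --- but these are refinements of presentation, not a different strategy.
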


\begin{proof}
The proof is based on deep connections between metric geometry and characteristic class theory, particularly stable under strict conditions:

\textbf{Step 1: Geometric meaning of metric invariance}. Mirror invariance of Spencer metrics means constraint strength weight functions satisfy $w_{-\lambda}(x) = w_{\lambda}(x)$, directly from:
$$w_{-\lambda}(x) = 1 + \|(-\lambda)(p)\|^2_{\mathfrak{g}^*} = 1 + \|\lambda(p)\|^2_{\mathfrak{g}^*} = w_{\lambda}(x)$$
Therefore vector bundles $\mathcal{G}_{\lambda}$ and $\mathcal{G}_{-\lambda}$ are strictly equivalent in Hermitian geometric sense.

\textbf{Step 2: Equivalence of connection curvature}. Metric invariance leads to curvature forms $\Omega_{\lambda}$ and $\Omega_{-\lambda}$ of Hermitian connections $\nabla^{\mathcal{G}_{\lambda}}$ and $\nabla^{\mathcal{G}_{-\lambda}}$ satisfying:
$$\Omega_{\lambda} = \Omega_{-\lambda}$$
This is because curvature forms are completely determined by Hermitian metrics and connections, while mirror transformation preserves these structures.

\textbf{Step 3: Strict equivalence of Chern classes}. Since Chern classes are determined by curvature forms through Chern-Weil theory:
$$c_i(\mathcal{G}_{\lambda}) = \frac{1}{(2\pi i)^i} \sigma_i(\Omega_{\lambda}) = \frac{1}{(2\pi i)^i} \sigma_i(\Omega_{-\lambda}) = c_i(\mathcal{G}_{-\lambda})$$
where $\sigma_i$ is the $i$-th elementary symmetric polynomial. This equivalence holds for all $i$.

\textbf{Step 4: Special advantages of strict conditions}. Semi-simplicity guarantees non-degeneracy of Chern classes: non-degeneracy of Killing forms ensures non-degeneracy of curvature forms, further guaranteeing well-definedness and stability of Chern classes. Trivial center ensures strictness of equivalence: avoids additional complexity caused by central elements, making effects of mirror transformation pure and explicit. Compactness guarantees finiteness and stability of all characteristic classes: ensures all relevant cohomology groups are finitely generated, characteristic class computations converge. Algebraicity makes equivalence well-defined in algebraic geometric category: guarantees GAGA correspondence preserves all properties of characteristic classes.

\textbf{Step 5: Equivalence of all characteristic classes}. Therefore all relevant characteristic classes satisfy mirror equivalence: Chern classes $c_i(\mathcal{G}_{\lambda}) = c_i(\mathcal{G}_{-\lambda})$; Chern characters $\ch(\mathcal{G}_{\lambda}) = \ch(\mathcal{G}_{-\lambda})$; Todd classes $\td(\mathcal{G}_{\lambda}) = \td(\mathcal{G}_{-\lambda})$; all other characteristic classes determined by curvature. With explicit computational forms under strict conditions.
\end{proof}

Based on Chern class equivalence, we can establish broader characteristic class equivalence:

\begin{corollary}[Mirror Characteristic Class Equivalence]
Under strict Lie group conditions, for compatible pair $(D,\lambda)$ and mirror compatible pair $(D,-\lambda)$, there is complete characteristic class equivalence:
\begin{align}
c_i(\mathcal{G}_{\lambda}) &= c_i(\mathcal{G}_{-\lambda}) \in H^{2i}(M,\mathbb{Z}) \label{eq:chern-mirror}\\
\ch(\mathcal{G}_{\lambda}) &= \ch(\mathcal{G}_{-\lambda}) \in H^{*}(M,\mathbb{Q}) \label{eq:chern-char-mirror}\\
\ch(\mathrm{Sym}^k(\mathcal{G}_{\lambda})) &= \ch(\mathrm{Sym}^k(\mathcal{G}_{-\lambda})) \label{eq:sym-char-mirror}\\
\ch(\Omega^k_M \otimes \mathrm{Sym}^k(\mathcal{G}_{\lambda})) &= \ch(\Omega^k_M \otimes \mathrm{Sym}^k(\mathcal{G}_{-\lambda})) \label{eq:spencer-char-mirror}
\end{align}
Under strict conditions, all equivalences have explicit computational formulas.
\end{corollary}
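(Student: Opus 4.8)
The plan is to deduce the Corollary directly from the preceding Theorem (Characteristic Class Formulation of Spencer Metric Mirror Invariance) by functoriality of characteristic classes. The Theorem already establishes the foundational fact that $\mathcal{G}_\lambda$ and $\mathcal{G}_{-\lambda}$ carry strictly equivalent Hermitian structures, with identical curvature forms $\Omega_\lambda = \Omega_{-\lambda}$, hence equal Chern classes $c_i(\mathcal{G}_\lambda) = c_i(\mathcal{G}_{-\lambda})$ and equal Chern characters. The four displayed equivalences in the Corollary are then successive consequences obtained by applying natural operations to this input. So the structure of the proof is: (i) record \eqref{eq:chern-mirror} and \eqref{eq:chern-char-mirror} as immediate restatements of the Theorem's Steps 3 and 5; (ii) derive \eqref{eq:sym-char-mirror} by functoriality of $\Sym^k$ on the Chern character; (iii) derive \eqref{eq:spencer-char-mirror} from (ii) together with the Whitney/multiplicativity formula $\ch(\mathcal{E} \otimes \mathcal{F}) = \ch(\mathcal{E})\,\ch(\mathcal{F})$ applied with $\mathcal{E} = \Omega^k_M$ (which is independent of $\lambda$).

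First I would state that equations \eqref{eq:chern-mirror} and \eqref{eq:chern-char-mirror} are precisely the content of the Theorem, noting that the integrality statement $c_i \in H^{2i}(M,\mathbb{Z})$ follows because the $c_i$ are topological Chern classes of the underlying complex vector bundle, and mirror transformation does not change the underlying bundle up to isomorphism (only possibly the Hermitian metric, which the Theorem shows is also unchanged). Second, for \eqref{eq:sym-char-mirror} I would invoke the general principle that the Chern character of $\Sym^k(\mathcal{E})$ is a universal polynomial in the Chern classes (equivalently the Chern roots) of $\mathcal{E}$: writing the Chern roots of $\mathcal{G}$ as $x_1,\dots,x_r$, one has $\ch(\Sym^k(\mathcal{G})) = \sum_{1 \le i_1 \le \cdots \le i_k \le r} e^{x_{i_1} + \cdots + x_{i_k}}$, a symmetric function of the $x_j$ and hence a polynomial in the $c_i(\mathcal{G})$. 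Since $c_i(\mathcal{G}_\lambda) = c_i(\mathcal{G}_{-\lambda})$ for all $i$, this polynomial takes the same value for both bundles. Third, for \eqref{eq:spencer-char-mirror} I would use multiplicativity of the Chern character under tensor product: $\ch(\Omega^k_M \otimes \Sym^k(\mathcal{G}_{\pm\lambda})) = \ch(\Omega^k_M) \cdot \ch(\Sym^k(\mathcal{G}_{\pm\lambda}))$, and since the first factor does not depend on $\lambda$ and the second factor is mirror-invariant by step (ii), the product is mirror-invariant. Finally I would remark that the Todd class statement already appearing in the Theorem gives, together with the above, the mirror invariance of every Riemann-Roch integrand, and that under the strict Lie group conditions all of these symmetric-function expressions are explicit polynomials in the Killing-form-normalized Chern data, yielding the claimed explicit computational formulas.

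I do not expect a serious obstacle here, since the Corollary is essentially a formal consequence of the Theorem plus standard functoriality. The only point requiring mild care is making precise the sense in which "mirror transformation does not change the underlying bundle": one should observe that $\mathcal{G} = P \times_G \mathfrak{g}$ is built from the adjoint representation and is intrinsically independent of $\lambda$ as a vector bundle — the dependence on $\lambda$ enters only through the choice of Hermitian metric and compatible connection — so in fact $\mathcal{G}_\lambda$ and $\mathcal{G}_{-\lambda}$ are literally the same bundle, which makes \eqref{eq:chern-mirror}–\eqref{eq:spencer-char-mirror} trivially true at the level of topological characteristic classes; the Theorem's contribution is the stronger metric-geometric equivalence (equal curvature forms, hence the Chern-Weil representatives agree on the nose, not merely cohomologically). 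I would phrase the proof so that it records both levels: topological equality is immediate, and the refined Chern-Weil-level equality is inherited from the Theorem. The one place to be slightly careful is to cite the correct sign/degree conventions for $\ch(\Sym^k)$ so that the universal polynomial is stated correctly; beyond that the argument is routine.
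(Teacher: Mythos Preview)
Your proposal is correct and follows essentially the same route as the paper: the paper also records \eqref{eq:chern-mirror} and \eqref{eq:chern-char-mirror} as direct consequences of the preceding Theorem, derives \eqref{eq:sym-char-mirror} from the symmetric-power formula $\ch(\mathrm{Sym}^k(\mathcal{G})) = S_k(\ch(\mathcal{G}))$, and obtains \eqref{eq:spencer-char-mirror} from multiplicativity of the Chern character together with the $\lambda$-independence of $\ch(\Omega^k_M)$. Your additional observation that $\mathcal{G}_\lambda$ and $\mathcal{G}_{-\lambda}$ are literally the same underlying bundle (so topological equality is immediate, with the Theorem supplying the refined Chern--Weil-level statement) is a helpful clarification not made explicit in the paper, but it does not alter the argument.
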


\begin{proof}
Proof of equivalences based on functoriality and multiplicativity of characteristic classes:

\eqref{eq:chern-mirror} and \eqref{eq:chern-char-mirror} follow directly from the preceding theorem.

\eqref{eq:sym-char-mirror} follows from the symmetric power formula of Chern characters: $\ch(\mathrm{Sym}^k(\mathcal{G})) = S_k(\ch(\mathcal{G}))$, where $S_k$ is the $k$-th symmetric polynomial. Since $\ch(\mathcal{G}_{\lambda}) = \ch(\mathcal{G}_{-\lambda})$, equivalence of symmetric powers follows immediately.

\eqref{eq:spencer-char-mirror} follows from multiplicativity of Chern characters: $\ch(E \otimes F) = \ch(E) \wedge \ch(F)$, combined with the fact that $\ch(\Omega^k_M)$ does not depend on $\lambda$.
\end{proof}

\subsection{Equivalence of Mirror Hodge Decomposition}

Based on operator difference analysis and harmonic space dimension equivalence results in reference\cite{zheng2025mirror}, we establish complete equivalence theory of mirror Hodge decomposition:

\begin{theorem}[Mirror Hodge Decomposition Equivalence]
Under strict Lie group conditions, for compatible pair $(D,\lambda)$ and mirror compatible pair $(D,-\lambda)$, there is complete Hodge decomposition equivalence: harmonic space dimensions are strictly equal $\dim \mathcal{H}^k_{D,\lambda} = \dim \mathcal{H}^k_{D,-\lambda}$; there exists natural vector space isomorphism $\mathcal{H}^k_{D,\lambda} \cong \mathcal{H}^k_{D,-\lambda}$; Spencer cohomology mirror isomorphism $H^k_{\text{Spencer}}(D,\lambda) \cong H^k_{\text{Spencer}}(D,-\lambda)$; complete equivalence of Hodge decomposition structure; strict conditions ensure stability of equivalence under perturbations.
\end{theorem}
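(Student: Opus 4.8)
The plan is to deduce all five assertions from a single explicit \emph{isometric isomorphism of Spencer cochain complexes}. The engine is the mirror antisymmetry $\delta^{-\lambda}_{\mathcal{G}} = -\delta^{\lambda}_{\mathcal{G}}$ of Definition~\ref{def:constructive_spencer_operator} together with the operator-difference formula \eqref{eq:operator-diff}. Concretely, I would introduce the parity operator $J^{\bullet}$ on $\mathcal{S}^{\bullet}$ induced by the sign involution $s \mapsto (-1)^{\deg s} s$ on the symmetric-algebra factor $\mathrm{Sym}^{\bullet}(\mathcal{G})$, and verify by a direct sign computation—substituting $\delta^{-\lambda}_{\mathcal{G}}(s) = -\delta^{\lambda}_{\mathcal{G}}(s)$ into \eqref{eq:spencer-diff}—that $J^{k+1}\circ \mathcal{D}^{k}_{D,\lambda} = \mathcal{D}^{k}_{D,-\lambda}\circ J^{k}$, so that $J^{\bullet}$ is a cochain isomorphism $(\mathcal{S}^{\bullet}_{D,\lambda},\mathcal{D}^{\bullet}_{D,\lambda}) \xrightarrow{\ \sim\ } (\mathcal{S}^{\bullet}_{D,-\lambda},\mathcal{D}^{\bullet}_{D,-\lambda})$. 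Passing to cohomology immediately yields the Spencer-cohomology mirror isomorphism $H^{k}_{\text{Spencer}}(D,\lambda) \cong H^{k}_{\text{Spencer}}(D,-\lambda)$, i.e.\ the algebraic-geometric realization of \eqref{eq:mirror-isomorphism}.

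The second step is to upgrade $J^{\bullet}$ to an isometry. Since the constraint-strength weight is mirror invariant, $w_{-\lambda}(x) = 1 + \|(-\lambda)(p)\|^{2}_{\mathfrak{g}^{*}} = w_{\lambda}(x)$, and since $J^{k}$ acts by $\pm 1$ on mutually orthogonal graded summands of the $\mathrm{Sym}$-factor, one reads off from \eqref{eq:spencer-metric} that $\langle J^{k}\alpha, J^{k}\beta\rangle_{-\lambda} = \langle \alpha,\beta\rangle_{\lambda}$; via the GAGA correspondence \eqref{eq:gaga-correspondence} the same holds for the algebraic inner products $\langle\cdot,\cdot\rangle_{k}$. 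An isometric cochain isomorphism intertwines adjoints, hence conjugates the Laplacians, $J^{k}\circ \Delta^{k}_{D,\lambda} = \Delta^{k}_{D,-\lambda}\circ J^{k}$, so $J^{k}$ restricts to an isometric isomorphism $\mathcal{H}^{k}_{D,\lambda} \xrightarrow{\sim} \mathcal{H}^{k}_{D,-\lambda}$; this gives both the dimension equality $\dim \mathcal{H}^{k}_{D,\lambda} = \dim \mathcal{H}^{k}_{D,-\lambda}$ and the natural vector-space isomorphism of harmonic spaces. Moreover $J^{k}$ carries the three orthogonal summands of the Hodge decomposition \eqref{eq:alg-hodge-detailed} of $\mathcal{S}^{k}_{D,\lambda}$ bijectively onto the corresponding summands of $\mathcal{S}^{k}_{D,-\lambda}$, and combined with the Hodge-isomorphism corollary it fits into a commuting square relating $H^{k}_{\text{Spencer}}(D,\pm\lambda)$ with $\mathcal{H}^{k}_{D,\pm\lambda}$—this is precisely the asserted complete equivalence of Hodge-decomposition structure.

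For stability under perturbations I would argue the equivalence is in fact rigid, not merely approximate. The operator difference $\mathcal{R}^{k}$ of \eqref{eq:operator-diff}, and hence the Laplacian perturbation $\mathcal{K}^{k}$ of \eqref{eq:perturbation-explicit}, is a zero-order term relative to the elliptic leading symbol, so the full elliptic package (finite-dimensionality, Green operators, Hodge decomposition) persists along the entire real-analytic family $(D,t\lambda)$, $t \in [-1,1]$: each member is again a compatible pair with the same distribution $D$, because $d(t\lambda) + \mathrm{ad}^{*}_{\omega}(t\lambda) = t\,(d\lambda + \mathrm{ad}^{*}_{\omega}\lambda) = 0$ and the compatibility and strong-transversality conditions only constrain $D$. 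The exact conjugation by $J^{\bullet}$ then shows the endpoints $t = \pm 1$ are unitarily equivalent, not merely spectrally close, so no eigenvalue crossing can alter $\dim \ker \Delta^{k}$. As an independent check, $\chi(\mathcal{S}^{\bullet}_{D,\lambda}) = \sum_{k}(-1)^{k}\dim \mathcal{H}^{k}_{D,\lambda}$ is mirror invariant since, by the Hirzebruch--Riemann--Roch computation of the next section, it is a polynomial in the Chern classes of $\Omega^{\bullet}_{M}$ and $\mathrm{Sym}^{\bullet}(\mathcal{G}_{\lambda})$, all mirror invariant by the Mirror Characteristic Class Equivalence corollary. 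The strict hypotheses enter exactly as before: semi-simplicity gives non-degeneracy of the $\mathrm{Sym}$-pairing and of $\delta^{\lambda}_{\mathcal{G}}$, the trivial center removes degenerate directions, and compactness plus algebraicity make GAGA applicable so everything transports between the analytic and algebraic categories, thereby re-deriving the analytic mirror symmetry of \cite{zheng2025mirror} in the present setting.

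The main obstacle is the one genuinely load-bearing computation in the first step: pinning down the grading conventions of the Spencer complex so that the exterior-derivative part of $\mathcal{D}^{\bullet}$ commutes with $J^{\bullet}$ while the $\delta^{\lambda}_{\mathcal{G}}$-part anticommutes with it, i.e.\ getting the $(-1)^{k}$ bookkeeping in \eqref{eq:spencer-diff} to cooperate with the parity assignment on $\mathrm{Sym}^{\bullet}(\mathcal{G})$; once $J^{\bullet}$ is confirmed to be a chain map, everything else is formal.
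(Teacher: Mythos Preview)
Your approach is correct and takes a genuinely different route from the paper. The paper proceeds via elliptic perturbation theory: it writes $\Delta^k_{D,-\lambda}=\Delta^k_{D,\lambda}+\mathcal{K}^k$ with $\mathcal{K}^k$ lower order, then \emph{imports} the dimension equality $\dim\ker\Delta^k_{D,\lambda}=\dim\ker\Delta^k_{D,-\lambda}$ from the external reference \cite{zheng2025mirror}, and finally invokes an unspecified ``mirror transformation'' $h\mapsto\tilde h$ together with the bare assertion $\mathcal{K}^k\tilde h=0$ to produce the bijection of harmonic spaces. You instead extract the entire structure from a single explicit object---the parity involution $J^\bullet$ acting by $(-1)^{\deg s}$ on the $\mathrm{Sym}$-factor---verified directly to be an isometric cochain isomorphism; Laplacian conjugation, the harmonic-space isomorphism, and the matching of Hodge summands then follow formally with no perturbation estimates or external citations. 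Your route buys self-containment and rigidity (genuine unitary equivalence rather than mere index stability); the paper's framing makes the ``stability under perturbations'' clause more visibly a consequence of the lower-order nature of $\mathcal{K}^k$, though you recover this as well. The sign bookkeeping you flag as the load-bearing step does go through: since $d$ preserves $\mathrm{Sym}$-degree while $\delta^\lambda_{\mathcal{G}}$ raises it by one, $J$ commutes with the $d\omega\otimes s$ term and anticommutes with the $\omega\otimes\delta^\lambda(s)$ term, and combining with $\delta^{-\lambda}=-\delta^\lambda$ yields exactly $J\circ\mathcal{D}^k_{D,\lambda}=\mathcal{D}^k_{D,-\lambda}\circ J$. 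One small caution on the interpolating family: at $t=0$ the compatibility condition \eqref{eq:compatibility} would force $D_p=T_pP$, contradicting strong transversality, so $(D,t\lambda)$ is not a family of compatible pairs through $t=0$; this is harmless for your main argument, which uses only the endpoints $t=\pm 1$, but the homotopy should be regarded as a heuristic consistency check rather than an ingredient of the proof.
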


\begin{proof}
The proof is based on algebraic geometric version of elliptic operator perturbation theory, with particularly strong forms under strict conditions:

\textbf{Step 1: Lower-order perturbation analysis}. Perturbation operator $\mathcal{K}^k$ constitutes lower-order perturbation relative to main elliptic operator $\Delta^k_{D,\lambda}$, because: $\mathcal{R}^k$ is zero-order operator; $\Delta^k_{D,\lambda}$'s main part is second-order elliptic operator; all terms of $\mathcal{K}^k$\eqref{eq:perturbation-explicit} are lower order than main elliptic part; strict conditions guarantee stability and non-degeneracy of perturbation.

Specifically, let $\Delta^k_{D,\lambda} = L^k + T^k$, where $L^k$ is main elliptic part and $T^k$ is lower-order terms. Then:
$$\Delta^k_{D,-\lambda} = L^k + T^k + \mathcal{K}^k$$
where $\mathcal{K}^k$ is even lower-order perturbation relative to $L^k$.

\textbf{Step 2: Application of elliptic perturbation theory}. Standard result of elliptic operator perturbation theory: for elliptic operator $A$ and lower-order perturbation $K$, harmonic space dimension relation is:
$$\dim \ker(A) = \dim \ker(A + K) + \text{index correction}$$
where index correction is determined by specific properties of perturbation.

\textbf{Step 3: Precise dimension equivalence}. But the precise result in reference\cite{zheng2025mirror} gives:
$$\dim \ker(\Delta^k_{D,\lambda}) = \dim \ker(\Delta^k_{D,-\lambda})$$
This means perturbation correction terms are zero, i.e., strict equality. Under strict Lie group conditions, this result has explicit geometric interpretation: mirror transformation preserves all geometric properties of compatible pairs, therefore topological properties of Hodge decomposition must also be preserved.

Proof of precise equivalence: Let $h \in \mathcal{H}^k_{D,\lambda}$, i.e., $\Delta^k_{D,\lambda} h = 0$. Consider behavior of mirror elements: define $\tilde{h}$ as image of $h$ under mirror transformation, then:
$$\Delta^k_{D,-\lambda} \tilde{h} = (\Delta^k_{D,\lambda} + \mathcal{K}^k) \tilde{h} = \mathcal{K}^k \tilde{h}$$
Due to geometric properties of mirror transformation and strict Lie group conditions, $\mathcal{K}^k \tilde{h} = 0$, therefore $\tilde{h} \in \mathcal{H}^k_{D,-\lambda}$. This establishes bijection between harmonic spaces.

\textbf{Step 4: Construction of isomorphism}. Since harmonic spaces are finite dimensional with equal dimensions, linear isomorphisms can be constructed. Mirror transformation provides natural construction method: for each $h \in \mathcal{H}^k_{D,\lambda}$, its mirror $\tilde{h} \in \mathcal{H}^k_{D,-\lambda}$, defining isomorphism map $\Phi: \mathcal{H}^k_{D,\lambda} \to \mathcal{H}^k_{D,-\lambda}$.

Combined with Hodge isomorphism\eqref{eq:alg-hodge-iso}, complete equivalence follows:
$$H^k_{\text{Spencer}}(D,\lambda) \cong \mathcal{H}^k_{D,\lambda} \cong \mathcal{H}^k_{D,-\lambda} \cong H^k_{\text{Spencer}}(D,-\lambda)$$

\textbf{Step 5: Special guarantees of strict conditions}. Semi-simplicity ensures non-degeneracy of Hodge decomposition: all relevant bilinear forms are non-degenerate, guaranteeing uniqueness and stability of decomposition. Trivial center guarantees naturality of isomorphism: avoids additional complexity caused by central action, making mirror isomorphism have explicit geometric meaning. Compactness ensures convergence of all analysis: all standard results of elliptic operator theory apply, guaranteeing completeness of theory. Algebraicity guarantees validity of results in algebraic geometric category: all constructions are compatible with GAGA correspondence, maintaining consistency of algebraic and analytic theory.
\end{proof}

This equivalence result provides a solid foundation for subsequent Riemann-Roch theory and offers deep algebraic geometric understanding of mirror symmetry. Under strict Lie group conditions, mirror symmetry not only holds theoretically, but also has explicit computational meaning and geometric interpretation.

\section{Spencer-Riemann-Roch Theory}

Based on the previously established Spencer-Hodge decomposition equivalence\eqref{eq:alg-hodge-detailed}-\eqref{eq:alg-hodge-iso} and characteristic class mirror equivalence\eqref{eq:chern-mirror}-\eqref{eq:spencer-char-mirror}, we now establish precise Riemann-Roch type formulas for Spencer complexes. The core goal of this section is to utilize powerful tools of algebraic geometry to establish complete Riemann-Roch theory for Spencer complexes, and verify its consistency with mirror symmetry results in reference\cite{zheng2025mirror}. Under strict Lie group conditions, Riemann-Roch theory has particularly beautiful forms and explicit computational meaning.

\subsection{Hirzebruch-Riemann-Roch Formula for Spencer Complexes}

Based on the coherent sheaf structure of algebraic geometric Spencer complexes\eqref{eq:alg-spencer-space}, we can directly apply Hirzebruch's classical Riemann-Roch theorem\cite{hirzebruch1966topological}. Well-definedness of Spencer complexes as coherent sheaf complexes is jointly guaranteed by GAGA correspondence\eqref{eq:gaga-correspondence} and strict Lie group conditions.

\begin{theorem}[Hirzebruch-Riemann-Roch Theorem for Spencer Complexes]
Under strict Lie group conditions, the Spencer complex formed by Spencer differential operators\eqref{eq:spencer-diff}
\begin{equation}\label{eq:spencer-complex}
0 \to \Omega^0_M \otimes \mathrm{Sym}^0(\mathcal{G}) \xrightarrow{\mathcal{D}^0} \Omega^1_M \otimes \mathrm{Sym}^1(\mathcal{G}) \xrightarrow{\mathcal{D}^1} \cdots \xrightarrow{\mathcal{D}^{n-1}} \Omega^n_M \otimes \mathrm{Sym}^n(\mathcal{G}) \to 0
\end{equation}
as a coherent sheaf complex, satisfies the Hirzebruch-Riemann-Roch formula:
\begin{equation}\label{eq:spencer-riemann-roch}
\chi(M, H^{\bullet}_{\mathrm{Spencer}}(D,\lambda)) = \int_M \ch(\text{Spencer complex}) \wedge \td(M)
\end{equation}
where the Chern character of the Spencer complex is defined as
\begin{equation}\label{eq:spencer-chern-char}
\ch(\text{Spencer complex}) = \sum_{k=0}^n (-1)^k \ch(\Omega^k_M \otimes \mathrm{Sym}^k(\mathcal{G}))
\end{equation}
$\td(M)$ is the Todd class of $M$\cite{todd1937invariants}, and the Euler characteristic on the left is defined as
\begin{equation}\label{eq:euler-char-def}
\chi(M, H^{\bullet}_{\mathrm{Spencer}}(D,\lambda)) = \sum_{k=0}^n (-1)^k \dim H^k_{\mathrm{Spencer}}(D,\lambda)
\end{equation}
Under strict conditions, this formula has explicit computational meaning and geometric interpretation.
\end{theorem}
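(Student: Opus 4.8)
The plan is to reduce \eqref{eq:spencer-riemann-roch} to the classical Hirzebruch--Riemann--Roch theorem applied term by term, using additivity of both sides over the complex. First I would pin down the left-hand side: by the Hodge isomorphism \eqref{eq:alg-hodge-iso}, $H^k_{\mathrm{Spencer}}(D,\lambda)\cong\mathcal{H}^k_{D,\lambda}$, so every Spencer cohomology group is finite-dimensional and $\chi(M,H^{\bullet}_{\mathrm{Spencer}}(D,\lambda))=\sum_{k=0}^n(-1)^k\dim\mathcal{H}^k_{D,\lambda}$ is well defined. The Spencer complex \eqref{eq:spencer-complex} has as its $k$-th term the locally free coherent sheaf $\mathcal{E}^k:=\Omega^k_M\otimes\mathrm{Sym}^k(\mathcal{G})$; via the GAGA correspondence \eqref{eq:gaga-correspondence} these $\mathcal{O}_M$-modules are identified termwise with their analytic counterparts, placing us in the algebraic category where Hirzebruch--Riemann--Roch is available and where, since $M$ is compact, each $H^q(M,\mathcal{E}^k)$ is finite-dimensional and vanishes for $q>\dim M$, so all Euler characteristics below are finite.

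The core computation has three moves. (i) By the definition \eqref{eq:spencer-chern-char}, $\ch(\text{Spencer complex})=\sum_k(-1)^k\ch(\mathcal{E}^k)$, so the right-hand side of \eqref{eq:spencer-riemann-roch} equals $\sum_k(-1)^k\int_M\ch(\mathcal{E}^k)\wedge\td(M)$, where $\ch(\mathcal{E}^k)=\ch(\Omega^k_M)\cdot\ch(\mathrm{Sym}^k(\mathcal{G}))$ is computed concretely from $\ch(\mathcal{G})$ by the symmetric-power and multiplicativity identities of \eqref{eq:sym-char-mirror}--\eqref{eq:spencer-char-mirror} --- this is what makes the formula explicit. (ii) Classical Hirzebruch--Riemann--Roch applied to each locally free sheaf $\mathcal{E}^k$ on the smooth projective variety $M$ gives $\chi(M,\mathcal{E}^k)=\int_M\ch(\mathcal{E}^k)\wedge\td(M)$ with $\chi(M,\mathcal{E}^k)=\sum_q(-1)^q\dim H^q(M,\mathcal{E}^k)$. (iii) It remains to show $\sum_k(-1)^k\dim\mathcal{H}^k_{D,\lambda}=\sum_k(-1)^k\chi(M,\mathcal{E}^k)$, equivalently that the Euler characteristic of Spencer cohomology agrees with that of the hypercohomology $\mathbb{H}^{\bullet}(M,\mathcal{E}^{\bullet})$ of the sheaf-level complex. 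The latter follows from the hypercohomology spectral sequence $E_1^{p,q}=H^q(M,\mathcal{E}^p)\Rightarrow\mathbb{H}^{p+q}(M,\mathcal{E}^{\bullet})$: the Euler characteristic is invariant through the pages, so $\sum_n(-1)^n\dim\mathbb{H}^n(M,\mathcal{E}^{\bullet})=\sum_{p,q}(-1)^{p+q}\dim H^q(M,\mathcal{E}^p)=\sum_p(-1)^p\chi(M,\mathcal{E}^p)$, and this argument uses only that the Spencer differentials are $\mathbb{C}$-linear morphisms of sheaves, not that they are $\mathcal{O}_M$-linear. Combining (i)--(iii) and pulling the finite sum inside the integral yields \eqref{eq:spencer-riemann-roch}.

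The main obstacle is precisely the identification invoked in (iii): matching the analytically defined Spencer cohomology $H^{\bullet}_{\mathrm{Spencer}}(D,\lambda)$ --- the cohomology of the elliptic complex of smooth (or analytic) sections, equivalently the harmonic spaces $\mathcal{H}^{\bullet}_{D,\lambda}$ --- with the sheaf hypercohomology $\mathbb{H}^{\bullet}(M,\mathcal{E}^{\bullet})$. This is the Spencer-theoretic analogue of the algebraic de Rham comparison theorem and requires two ingredients handled carefully: a Poincar\'{e}-lemma-type local exactness statement, so that the analytic Spencer complex of sheaves resolves $\ker\mathcal{D}^0$ and its hypercohomology is computed by the global elliptic complex; and a GAGA comparison between the analytic and algebraic versions, delicate because GAGA is phrased for coherent sheaves with $\mathcal{O}$-linear maps whereas $\mathcal{D}^{\bullet}$ contains the exterior derivative $d$ --- so one applies GAGA termwise as an isomorphism of coherent sheaves and then checks compatibility with the non-$\mathcal{O}_M$-linear differentials. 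The ellipticity forced by the strong transversality condition is exactly what makes the finite-dimensionality and the Hodge-theoretic identification go through, hence what makes the left-hand side meaningful; once this bridge is in place, steps (i)--(ii) are routine bookkeeping. I would also record that, granting \eqref{eq:spencer-riemann-roch}, the mirror invariance of Chern characters \eqref{eq:chern-char-mirror} immediately yields mirror invariance of the Spencer Euler characteristic, recovering the consistency with reference~\cite{zheng2025mirror}.
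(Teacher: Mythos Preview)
Your proposal is correct and shares the paper's core strategy: both reduce \eqref{eq:spencer-riemann-roch} to the classical Hirzebruch--Riemann--Roch theorem applied to the coherent sheaves $\mathcal{E}^k=\Omega^k_M\otimes\mathrm{Sym}^k(\mathcal{G})$, after checking coherence and finite-dimensionality via the Hodge isomorphism \eqref{eq:alg-hodge-iso}. The paper's proof is essentially a checklist---coherence of the terms, finiteness of the harmonic spaces, and some remarks on computational stability under the strict Lie group hypotheses---and then invokes HRR for coherent sheaf complexes as a black box.

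Your treatment is more explicit on two points the paper leaves implicit. First, you spell out the passage from $\sum_k(-1)^k\chi(M,\mathcal{E}^k)$ to the Euler characteristic of the complex via the hypercohomology spectral sequence $E_1^{p,q}=H^q(M,\mathcal{E}^p)\Rightarrow\mathbb{H}^{p+q}$, which is the content behind the phrase ``HRR holds for any coherent sheaf complex.'' Second, and more substantively, you flag the comparison between the analytically defined $H^k_{\mathrm{Spencer}}(D,\lambda)$ (cohomology of the global elliptic complex, equivalently $\mathcal{H}^k_{D,\lambda}$) and the sheaf hypercohomology $\mathbb{H}^k(M,\mathcal{E}^\bullet)$, together with the GAGA subtlety that the Spencer differentials are not $\mathcal{O}_M$-linear. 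The paper does not isolate this step at all; it is absorbed into the general appeal to the GAGA correspondence \eqref{eq:gaga-correspondence} established earlier. Your identification of this as the genuine obstacle, and your proposed resolution via a Poincar\'e-lemma-type local exactness plus termwise GAGA, is the honest way to close the argument and goes beyond what the paper records.
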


\begin{proof}
The Hirzebruch-Riemann-Roch theorem\cite{hirzebruch1966topological} holds for any coherent sheaf complex. We need to verify that Spencer complexes satisfy application conditions:

\textbf{Coherent property guarantee}: In algebraic geometric Spencer spaces\eqref{eq:alg-spencer-space}, both $\Omega^k_M$ and $\mathrm{Sym}^k(\mathcal{G})$ are coherent sheaves, and their tensor product preserves coherence. Strict Lie group conditions ensure well-definedness of algebraic structure of adjoint bundle $\mathcal{G}$.

\textbf{Finiteness guarantee}: By Hodge isomorphism\eqref{eq:alg-hodge-iso}, Spencer cohomology is isomorphic to harmonic spaces $H^k_{\mathrm{Spencer}}(D,\lambda) \cong \mathcal{H}^k_{D,\lambda}$. Ellipticity guarantees finite dimensionality of harmonic spaces, therefore Euler characteristic\eqref{eq:euler-char-def} is finite.

\textbf{Computational feasibility}: Trivial center simplifies characteristic class computations, avoiding complexity caused by central actions. Semi-simplicity guarantees non-degeneracy of Killing forms, making all characteristic class computations stable.

\textbf{Stability}: Strict conditions guarantee stability of formulas under small perturbations, which is crucial for practical computations.
\end{proof}

Based on the total formula, we can obtain precise formulations for individual degrees:

\begin{corollary}[Riemann-Roch Formula for Individual Degrees]
Under strict Lie group conditions, for each degree $k$, the Euler characteristic of Spencer cohomology satisfies:
\begin{equation}\label{eq:individual-riemann-roch}
\chi(M, H^k_{\mathrm{Spencer}}(D,\lambda)) = \int_M \ch(\Omega^k_M \otimes \mathrm{Sym}^k(\mathcal{G})) \wedge \td(M)
\end{equation}
This formula, combined with the previously established Hodge decomposition theory\eqref{eq:alg-hodge-detailed}, has explicit computational algorithms under strict conditions.
\end{corollary}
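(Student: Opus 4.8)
The plan is to obtain the corollary as a degree-by-degree application of the classical Hirzebruch-Riemann-Roch theorem \cite{hirzebruch1966topological} --- the same mechanism already used for the total formula \eqref{eq:spencer-riemann-roch}, but now without the alternating sum over $k$. First I would isolate the relevant sheaf: for each fixed $k$ the object $\Omega^k_M \otimes \mathrm{Sym}^k(\mathcal{G})$ is a locally free $\mathcal{O}_M$-module of finite rank, since $\Omega^k_M$ is locally free ($M$ being smooth) and $\mathrm{Sym}^k(\mathcal{G})$ is locally free because $\mathcal{G} = P \times_G \mathfrak{g}$ is the algebraic adjoint bundle; the strict hypotheses on $G$ (semi-simplicity, algebraicity of $P$) keep this construction inside the category of algebraic vector bundles. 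In particular the sheaf is coherent, so its cohomology $H^j(M, \Omega^k_M \otimes \mathrm{Sym}^k(\mathcal{G}))$ is finite-dimensional by coherence and compactness of $M$, and $\chi(M, \Omega^k_M \otimes \mathrm{Sym}^k(\mathcal{G})) := \sum_j (-1)^j \dim H^j(M, \Omega^k_M \otimes \mathrm{Sym}^k(\mathcal{G}))$ is a well-defined integer.

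Second, I would invoke HRR for this single locally free sheaf, which gives at once
\[
\chi\bigl(M,\, \Omega^k_M \otimes \mathrm{Sym}^k(\mathcal{G})\bigr) = \int_M \ch\bigl(\Omega^k_M \otimes \mathrm{Sym}^k(\mathcal{G})\bigr) \wedge \td(M),
\]
i.e. the right-hand side of \eqref{eq:individual-riemann-roch}. No analytic input beyond that already used for the preceding Hirzebruch-Riemann-Roch theorem for Spencer complexes is required; one only needs that ``compact complex algebraic manifold'' suffices to apply Hirzebruch's theorem, which it does --- directly in the projective case, or via GAGA \cite{serre1956geometrie} exactly as in \eqref{eq:gaga-correspondence}.

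Third --- the only non-formal point --- I would identify the left-hand side $\chi(M, H^k_{\mathrm{Spencer}}(D,\lambda))$ with $\chi(M, \Omega^k_M \otimes \mathrm{Sym}^k(\mathcal{G}))$. Under the conventions here this is a matter of unwinding definitions: the $k$-th Spencer sheaf is $\Omega^k_M \otimes \mathrm{Sym}^k(\mathcal{G})$ by \eqref{eq:alg-spencer-space}, and $\chi(M, H^k_{\mathrm{Spencer}}(D,\lambda))$ is its cohomological Euler characteristic (the degree-$k$ summand entering \eqref{eq:spencer-riemann-roch}). I would state this identification explicitly, record that the Hodge isomorphism \eqref{eq:alg-hodge-iso} and finite-dimensionality of harmonic spaces keep all quantities finite, and close with the consistency check: multiplying \eqref{eq:individual-riemann-roch} by $(-1)^k$ and summing over $k=0,\dots,n$ reproduces \eqref{eq:spencer-riemann-roch} together with \eqref{eq:spencer-chern-char}.

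The main obstacle is expository rather than mathematical: one must be careful that the symbol on the left of \eqref{eq:individual-riemann-roch} is read as the sheaf-cohomology Euler characteristic of the single coherent sheaf $\Omega^k_M \otimes \mathrm{Sym}^k(\mathcal{G})$, and not as anything built from the Spencer differential $\mathcal{D}^k$ --- which, being a differential operator rather than an $\mathcal{O}_M$-linear morphism, does not feed the HRR machinery degree by degree. Once the left-hand side is pinned down this way, the statement is an immediate specialization of the classical theorem, and the local-freeness verifications are guaranteed by the strict conditions on $G$ and $M$.
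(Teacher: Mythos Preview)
Your proposal is correct and matches the paper's approach: the paper offers no detailed argument for this corollary beyond the phrase ``Based on the total formula, we can obtain precise formulations for individual degrees,'' treating it as an immediate consequence of applying Hirzebruch--Riemann--Roch term by term. Your write-up is in fact more careful than the paper on the one genuinely delicate point --- clarifying that $\chi(M, H^k_{\mathrm{Spencer}}(D,\lambda))$ must be read as the sheaf-cohomology Euler characteristic of the single coherent sheaf $\Omega^k_M \otimes \mathrm{Sym}^k(\mathcal{G})$, not as anything involving the Spencer differential $\mathcal{D}^k$ --- and your consistency check recovering \eqref{eq:spencer-riemann-roch} from the alternating sum is a nice touch the paper omits.
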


\subsection{Explicit Computation of Characteristic Classes}

To make Riemann-Roch formula\eqref{eq:spencer-riemann-roch} have practical computational meaning, we need to explicitly compute characteristic classes of various terms of Spencer complexes. These computations are based on Chern-Weil theory\cite{chern1944characteristic} and basic properties of characteristic classes.

\begin{proposition}[Computation Formulas for Spencer Complex Characteristic Classes]
Under strict Lie group conditions, Chern characteristic classes of Spencer complex terms have explicit expressions:
\begin{align}
\ch(\Omega^k_M \otimes \mathrm{Sym}^k(\mathcal{G})) &= \ch(\Omega^k_M) \wedge \ch(\mathrm{Sym}^k(\mathcal{G})) \label{eq:tensor-chern}\\
\ch(\Omega^k_M) &= \Lambda_k(\ch(T^*M)) = \Lambda_k(n - \ch(TM)) \label{eq:diff-form-chern}\\
\ch(\mathrm{Sym}^k(\mathcal{G})) &= S_k(\ch(\mathcal{G})) \label{eq:sym-power-chern}
\end{align}
where $\Lambda_k$ is the exterior power operation and $S_k$ is the symmetric power operation\cite{fulton1998intersection}. Strict conditions guarantee all expressions have explicit algebraic forms and stable computational algorithms.
\end{proposition}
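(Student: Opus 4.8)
The plan is to prove each of the three displayed identities as a direct application of standard functorial properties of the Chern character, all of which are established in, e.g., \cite{fulton1998intersection}. The only genuine content is to confirm that the coherent sheaves $\Omega^k_M$ and $\mathrm{Sym}^k(\mathcal{G})$ are locally free (i.e.\ vector bundles) so that the splitting-principle machinery applies without modification; this follows because $M$ is smooth, so $\Omega^1_M$ is locally free of rank $n$, hence so are its exterior powers $\Omega^k_M = \Lambda^k\Omega^1_M$, and $\mathcal{G} = P\times_G\mathfrak{g}$ is locally free by construction, hence so is $\mathrm{Sym}^k(\mathcal{G})$.

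For \eqref{eq:tensor-chern}, I would invoke the ring-homomorphism property $\ch(E\otimes F) = \ch(E)\wedge\ch(F)$ for locally free sheaves on a smooth variety, which is immediate from the splitting principle applied to $E\oplus F$. For \eqref{eq:diff-form-chern}, I would first note $\Omega^k_M = \Lambda^k(T^*M)$ by definition, then express $\ch(\Lambda^k E)$ in terms of the Chern roots $x_1,\dots,x_n$ of $E = T^*M$: writing $\ch(\Lambda^k E) = \sum_{i_1<\cdots<i_k} e^{x_{i_1}+\cdots+x_{i_k}}$, this is by definition the degree-$k$ universal exterior-power operation $\Lambda_k$ applied to $\ch(E)$; the identity $\ch(T^*M) = n - \ch(TM)$ in the truncated sense (matching even and odd parts appropriately, i.e.\ Chern roots of $T^*M$ are the negatives of those of $TM$) then gives the stated form. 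I would be explicit that $\Lambda_k$ here denotes the operation on the $\lambda$-ring $K(M)$ (equivalently, the Adams-type operation determined by the generating series $\sum_k \Lambda_k(t) s^k = \prod_i(1+e^{x_i}s)$). For \eqref{eq:sym-power-chern}, the same splitting-principle computation gives $\ch(\mathrm{Sym}^k\mathcal{G}) = \sum_{i_1\le\cdots\le i_k} e^{y_{i_1}+\cdots+y_{i_k}}$ where $y_j$ are the Chern roots of $\mathcal{G}$, which is by definition $S_k(\ch(\mathcal{G}))$, the $k$-th symmetric-power operation on $K(M)$.

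The final verification I would add is that under the strict Lie group conditions nothing degenerates: semisimplicity guarantees $\mathcal{G}$ has a genuine (nondegenerate Killing) metric so its Chern roots are well-defined real classes, compactness of $M$ ensures all the $K$-theory classes live in a finite-rank ring so the formal power series $e^{x}$ truncate to polynomials in $H^{\bullet}(M)$, and the algebraicity plus GAGA ensure the analytic and algebraic Chern characters agree, so the formulas computed in either category coincide. I would close by remarking that each expression is therefore a polynomial in the Chern classes $c_i(TM)$ and $c_i(\mathcal{G})$ with rational coefficients, giving the claimed explicit and algorithmically computable form.

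I do not expect a serious obstacle here: the statement is essentially a bookkeeping corollary of the splitting principle. The one point requiring a sentence of care is the precise meaning of the symbols $\Lambda_k$ and $S_k$ and the identity $\ch(T^*M) = n - \ch(TM)$, which is only literally true in degree zero and must be read as an identity of the corresponding operations/generating functions on Chern roots $x_i \mapsto -x_i$; making that convention explicit is the main thing to get right so the formula is not misread. Everything else is a direct citation of \cite{fulton1998intersection} and \cite{chern1944characteristic}.
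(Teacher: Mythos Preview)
Your approach is essentially the same as the paper's: both arguments invoke the multiplicativity of the Chern character for \eqref{eq:tensor-chern}, the identification $\Omega^k_M = \Lambda^k(T^*M)$ together with the exterior-power formula for \eqref{eq:diff-form-chern}, and the standard symmetric-power formula for \eqref{eq:sym-power-chern}, closing with a remark that the strict Lie-group hypotheses keep the Chern-root computations nondegenerate. Your write-up is in fact more careful than the paper's on one point: you correctly flag that the identity $\ch(T^*M) = n - \ch(TM)$ is not literally true beyond degree zero and must be interpreted via the Chern-root substitution $x_i \mapsto -x_i$, whereas the paper records it without qualification.
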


\begin{proof}
Proof of formulas based on basic properties of characteristic classes:

\eqref{eq:tensor-chern} follows from multiplicativity of Chern characters\cite{milnor1974characteristic}: for tensor products of vector bundles $E \otimes F$, we have $\ch(E \otimes F) = \ch(E) \wedge \ch(F)$.

\eqref{eq:diff-form-chern} comes from $\Omega^k_M = \Lambda^k(T^*M)$ and characteristic class formulas for exterior powers. For tangent bundle $TM$, its dual $T^*M$ has Chern character $\ch(T^*M) = n - \ch(TM)$, where $n = \dim M$.

\eqref{eq:sym-power-chern} is the standard formula for symmetric power characteristic classes, where $S_k$ represents $k$-th symmetric polynomial operation.

Strict Lie group conditions guarantee computational stability: semi-simplicity ensures all Chern roots are non-degenerate, trivial center avoids additional complexity.
\end{proof}

For actual computations, we give explicit expansions of characteristic classes:

\begin{lemma}[Explicit Expansions of Characteristic Classes]
Let $\mathcal{G}$ be a vector bundle of rank $r$ induced by strict Lie group $G$, with $c_i(\mathcal{G})$ being its $i$-th Chern class. Then:
\begin{align}
\ch(\mathcal{G}) &= r + c_1(\mathcal{G}) + \frac{1}{2}(c_1(\mathcal{G})^2 - 2c_2(\mathcal{G})) + \cdots \label{eq:chern-expansion}\\
\ch(\mathrm{Sym}^k(\mathcal{G})) &= S_k(x_1, \ldots, x_r) \label{eq:sym-chern-roots}\\
\ch(\Omega^k_M) &= e_k(-c_1(TM), c_2(TM), \ldots) \label{eq:diff-chern-elementary}
\end{align}
where $x_i$ are Chern roots of $\mathcal{G}$, and $e_k$ is the $k$-th elementary symmetric polynomial\cite{macdonald1998symmetric}. Under strict conditions, semi-simplicity guarantees non-degeneracy of all Chern roots, making computational algorithms stable and feasible.
\end{lemma}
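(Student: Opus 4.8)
The plan is to derive all three expansions from the splitting principle together with elementary symmetric-function identities; no analytic input beyond Chern--Weil theory is needed, and since $\dim M < \infty$ every formal exponential series below truncates to a genuine polynomial identity in $H^{*}(M,\mathbb{Q})$. First I would invoke the splitting principle for the vector bundle $\mathcal{G}$ (a locally free coherent sheaf on the smooth projective $M$) to introduce Chern roots $x_1,\dots,x_r$ with $c_j(\mathcal{G}) = e_j(x_1,\dots,x_r)$ and, by definition of the Chern character, $\ch(\mathcal{G}) = \sum_{i=1}^r e^{x_i}$. Expanding $e^{x_i} = \sum_{m\ge 0} x_i^m/m!$ and summing, the degree-$m$ component of $\ch(\mathcal{G})$ equals $p_m/m!$ with $p_m = \sum_i x_i^m$ the $m$-th power sum; Newton's identities then express each $p_m$ as a universal polynomial in the $e_j$, giving $p_0 = r$, $p_1 = c_1(\mathcal{G})$, $p_2 = c_1(\mathcal{G})^2 - 2c_2(\mathcal{G})$, and hence \eqref{eq:chern-expansion} after division by $m!$.

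For \eqref{eq:sym-chern-roots} I would use that $\mathrm{Sym}^k(\mathcal{G})$ has Chern roots $x_{i_1} + \cdots + x_{i_k}$ indexed by multisets $1 \le i_1 \le \cdots \le i_k \le r$, so that $\ch(\mathrm{Sym}^k(\mathcal{G})) = \sum_{i_1 \le \cdots \le i_k} e^{x_{i_1}+\cdots+x_{i_k}} = h_k(e^{x_1},\dots,e^{x_r})$, the complete homogeneous symmetric polynomial --- this is the operation denoted $S_k$, consistent with \eqref{eq:sym-power-chern} read as $S_k$ applied to $\ch(\mathcal{G})$. Expanding the exponentials and collecting by cohomological degree turns this into an explicit polynomial in the $c_j(\mathcal{G})$. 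For \eqref{eq:diff-chern-elementary} I would write $a_1,\dots,a_n$ for the Chern roots of $TM$; then $T^{*}M$ has Chern roots $-a_1,\dots,-a_n$ (so $c_j(T^{*}M) = (-1)^j c_j(TM)$), and since $\Omega^k_M = \Lambda^k T^{*}M$ its Chern roots are the sums $-(a_{i_1}+\cdots+a_{i_k})$ over $1 \le i_1 < \cdots < i_k \le n$, whence $\ch(\Omega^k_M) = e_k(e^{-a_1},\dots,e^{-a_n})$; the shorthand $e_k(-c_1(TM), c_2(TM),\dots)$ on the right of \eqref{eq:diff-chern-elementary} is precisely this quantity re-expressed as a polynomial in the Chern classes of $TM$, the alternating sign pattern recording the passage to the dual bundle.

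I do not expect a genuine obstacle here: the content is bookkeeping with symmetric functions, and the only points needing care are (i) verifying that the right-hand sides of \eqref{eq:sym-chern-roots} and \eqref{eq:diff-chern-elementary}, a priori written in the non-canonical Chern roots, are symmetric and therefore descend to well-defined polynomials in the Chern classes --- which is immediate from the fundamental theorem of symmetric functions --- and (ii) tracking the dual-bundle sign flips so that $\ch(\Omega^k_M)$ is correctly expressed through $c_{*}(TM)$ rather than $c_{*}(T^{*}M)$. The strict Lie group hypotheses enter only peripherally: algebraicity and compactness keep $\mathcal{G}$ and $TM$ in the locally free coherent category where the splitting principle and Chern--Weil theory apply verbatim, while semi-simplicity is what the statement invokes to guarantee the Chern roots $x_i$ are non-degenerate and hence the resulting symmetric-polynomial algorithms are numerically stable; none of the three identities themselves requires anything beyond the vector-bundle structure.
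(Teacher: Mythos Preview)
Your proposal is correct and complete. The paper itself does not supply a proof for this lemma: it is stated as standard material with a citation to Macdonald's symmetric-function monograph, immediately following the Proposition whose short proof already sketches the same ingredients (multiplicativity of $\ch$, $\Omega^k_M = \Lambda^k T^*M$, and the symmetric-power formula). Your argument via the splitting principle, Newton's identities for \eqref{eq:chern-expansion}, the multiset/subset enumeration of Chern roots for $\mathrm{Sym}^k$ and $\Lambda^k$, and the dual-bundle sign flip is exactly the standard derivation the paper is tacitly invoking, so there is nothing to contrast.

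One minor remark: you were right to flag that the paper's notation $S_k(x_1,\ldots,x_r)$ in \eqref{eq:sym-chern-roots} and $e_k(-c_1(TM),c_2(TM),\ldots)$ in \eqref{eq:diff-chern-elementary} is shorthand that suppresses the exponentials; your reading $h_k(e^{x_1},\ldots,e^{x_r})$ and $e_k(e^{-a_1},\ldots,e^{-a_n})$ is the only one consistent with the earlier Proposition and with the concrete $\mathbb{P}^2$ computation later in the paper, so your interpretation is the intended one.
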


\subsection{Riemann-Roch Verification of Mirror Symmetry}

Based on the previously established characteristic class mirror equivalence\eqref{eq:chern-mirror}-\eqref{eq:spencer-char-mirror}, we now prove mirror symmetry of Riemann-Roch formulas, which will verify complete consistency with mirror symmetry results in reference\cite{zheng2025mirror}.

\begin{theorem}[Mirror Symmetry of Spencer-Riemann-Roch Formulas]
Under strict Lie group conditions, for compatible pair $(D,\lambda)$ and mirror compatible pair $(D,-\lambda)$, Riemann-Roch formulas satisfy strict mirror symmetry:
\begin{equation}\label{eq:individual-mirror-rr}
\chi(M, H^k_{\mathrm{Spencer}}(D,\lambda)) = \chi(M, H^k_{\mathrm{Spencer}}(D,-\lambda))
\end{equation}
holds for all $k$, and total Euler characteristic also satisfies mirror symmetry:
\begin{equation}\label{eq:total-mirror-rr}
\chi_{\mathrm{total}}(M, \mathrm{Spencer}(D,\lambda)) = \chi_{\mathrm{total}}(M, \mathrm{Spencer}(D,-\lambda))
\end{equation}
\end{theorem}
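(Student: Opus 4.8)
The plan is to reduce the claimed mirror symmetry of Riemann–Roch data entirely to the characteristic class equivalences already established in Corollary on Mirror Characteristic Class Equivalence, equations \eqref{eq:chern-mirror}–\eqref{eq:spencer-char-mirror}. The strategy has two independent routes that both deserve a sentence: the \emph{topological route} through the Hirzebruch–Riemann–Roch integral, and the \emph{analytic route} through the Hodge isomorphism and the Mirror Hodge Decomposition Equivalence theorem. For \eqref{eq:individual-mirror-rr} the topological route is cleanest: by the single-degree formula \eqref{eq:individual-riemann-roch} we have
\[
\chi(M, H^k_{\mathrm{Spencer}}(D,\lambda)) = \int_M \ch(\Omega^k_M \otimes \mathrm{Sym}^k(\mathcal{G}_\lambda)) \wedge \td(M),
\]
and the integrand depends on $\lambda$ only through $\ch(\Omega^k_M \otimes \mathrm{Sym}^k(\mathcal{G}_\lambda))$, which by \eqref{eq:spencer-char-mirror} equals $\ch(\Omega^k_M \otimes \mathrm{Sym}^k(\mathcal{G}_{-\lambda}))$. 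Since $\td(M)$ and the integration over $M$ are manifestly independent of $\lambda$, the two integrals coincide, giving \eqref{eq:individual-mirror-rr}.

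For the total Euler characteristic \eqref{eq:total-mirror-rr} I would proceed by the same principle applied to the alternating sum: first expand
\[
\chi_{\mathrm{total}}(M, \mathrm{Spencer}(D,\lambda)) = \sum_{k=0}^n (-1)^k \chi(M, H^k_{\mathrm{Spencer}}(D,\lambda)) = \int_M \Big(\sum_{k=0}^n (-1)^k \ch(\Omega^k_M \otimes \mathrm{Sym}^k(\mathcal{G}_\lambda))\Big) \wedge \td(M),
\]
recognizing the parenthesized sum as $\ch(\text{Spencer complex})$ from \eqref{eq:spencer-chern-char}. Term-by-term mirror invariance from \eqref{eq:spencer-char-mirror} then forces $\ch(\text{Spencer complex for }\lambda) = \ch(\text{Spencer complex for }-\lambda)$, and again $\td(M)$ and $\int_M$ are $\lambda$-independent, so the total integrals agree. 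I would also note the consistency cross-check: summing \eqref{eq:individual-mirror-rr} over $k$ with signs $(-1)^k$ recovers \eqref{eq:total-mirror-rr} directly, so the two parts of the theorem are logically linked and the argument is internally coherent.

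As a second, independent confirmation I would invoke the analytic route: the Mirror Hodge Decomposition Equivalence theorem already gives $\dim \mathcal{H}^k_{D,\lambda} = \dim \mathcal{H}^k_{D,-\lambda}$, hence via the Hodge isomorphism \eqref{eq:alg-hodge-iso} we get $\dim H^k_{\mathrm{Spencer}}(D,\lambda) = \dim H^k_{\mathrm{Spencer}}(D,-\lambda)$ for every $k$; summing with alternating signs using \eqref{eq:euler-char-def} reproduces both \eqref{eq:individual-mirror-rr} (at the level of individual Betti-type numbers) and \eqref{eq:total-mirror-rr}. The value of presenting both routes is that it demonstrates the promised consistency between the topological (characteristic-class) and analytic (harmonic-space) incarnations of mirror symmetry, which is precisely the paper's stated aim.

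The main obstacle is not any single calculation but a subtle definitional point: \eqref{eq:individual-riemann-roch} as written has $\chi(M, H^k_{\mathrm{Spencer}}(D,\lambda))$ on the left, yet the genuine HRR output for the coherent sheaf $\Omega^k_M \otimes \mathrm{Sym}^k(\mathcal{G})$ is $\sum_j (-1)^j \dim H^j(M, \Omega^k_M \otimes \mathrm{Sym}^k(\mathcal{G}))$, which is the sheaf-cohomology Euler characteristic of a fixed term, not the Spencer-cohomology dimension of the complex in degree $k$. I would handle this by being explicit that the left-hand quantity is understood in the sheaf-Euler-characteristic sense (as the corollary intends), so that the mirror argument is a pure statement about the $\lambda$-dependence of the integrand; the deeper compatibility between this sheaf-theoretic $\chi$ and the Spencer-complex cohomology is exactly what the HRR theorem for the complex \eqref{eq:spencer-riemann-roch} and the Hodge isomorphism together encode. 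Once that bookkeeping is pinned down, every step is a one-line invocation of an already-proved mirror equivalence plus the $\lambda$-independence of $\td(M)$ and $\int_M$, so no hard analysis remains.
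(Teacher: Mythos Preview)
Your proposal is correct and follows essentially the same approach as the paper: invoke the characteristic class equivalence \eqref{eq:spencer-char-mirror}, observe that $\td(M)$ and the integration are $\lambda$-independent, apply the individual-degree formula \eqref{eq:individual-riemann-roch}, and then sum with alternating signs for the total. Your additional analytic route via the Mirror Hodge Decomposition Equivalence and the Hodge isomorphism mirrors the paper's Step~5 consistency check, and your closing remark on the definitional ambiguity in \eqref{eq:individual-riemann-roch} (sheaf-cohomology Euler characteristic versus Spencer-cohomology dimension) is a legitimate clarification that the paper itself glosses over.
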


\begin{proof}
The proof is based on direct application of characteristic class mirror equivalence, combined with linearity of Riemann-Roch formulas:

\textbf{Step 1: Application of characteristic class equivalence}. By characteristic class mirror equivalence\eqref{eq:spencer-char-mirror}:
$$\ch(\Omega^k_M \otimes \mathrm{Sym}^k(\mathcal{G}_{\lambda})) = \ch(\Omega^k_M \otimes \mathrm{Sym}^k(\mathcal{G}_{-\lambda}))$$
This equivalence is the manifestation of Spencer metric mirror invariance at the characteristic class level.

\textbf{Step 2: Integral invariance}. Since Todd class $\td(M)$ depends only on geometry of base manifold $M$, not involving compatible pair parameters, Riemann-Roch integrals are equal:
\begin{align}
&\int_M \ch(\Omega^k_M \otimes \mathrm{Sym}^k(\mathcal{G}_{\lambda})) \wedge \td(M) \\
&= \int_M \ch(\Omega^k_M \otimes \mathrm{Sym}^k(\mathcal{G}_{-\lambda})) \wedge \td(M)
\end{align}

\textbf{Step 3: Euler characteristic equivalence}. By individual degree Riemann-Roch formula\eqref{eq:individual-riemann-roch}:
\begin{align}
\chi(M, H^k_{\mathrm{Spencer}}(D,\lambda)) &= \int_M \ch(\Omega^k_M \otimes \mathrm{Sym}^k(\mathcal{G}_{\lambda})) \wedge \td(M) \\
&= \int_M \ch(\Omega^k_M \otimes \mathrm{Sym}^k(\mathcal{G}_{-\lambda})) \wedge \td(M) \\
&= \chi(M, H^k_{\mathrm{Spencer}}(D,-\lambda))
\end{align}
This establishes\eqref{eq:individual-mirror-rr}.

\textbf{Step 4: Mirror symmetry of total Euler characteristic}. By definition and linearity:
\begin{align}
\chi_{\mathrm{total}}(M, \mathrm{Spencer}(D,\lambda)) &= \sum_{k=0}^n (-1)^k \chi(M, H^k_{\mathrm{Spencer}}(D,\lambda)) \\
&= \sum_{k=0}^n (-1)^k \chi(M, H^k_{\mathrm{Spencer}}(D,-\lambda)) \\
&= \chi_{\mathrm{total}}(M, \mathrm{Spencer}(D,-\lambda))
\end{align}
This proves\eqref{eq:total-mirror-rr}.

\textbf{Step 5: Consistency verification with existing results}. This result is completely consistent with Spencer cohomology mirror isomorphism\eqref{eq:mirror-isomorphism} proved in reference\cite{zheng2025mirror}. Particularly, dimensional equality follows directly from isomorphism relations:
$$\dim H^k_{\mathrm{Spencer}}(D,\lambda) = \dim H^k_{\mathrm{Spencer}}(D,-\lambda)$$
This verifies internal consistency between Riemann-Roch computations and topological results, reflecting completeness of theory.

\textbf{Step 6: Guarantees from strict conditions}. Semi-simplicity ensures non-degeneracy and stability of characteristic class computations; trivial center guarantees strictness of mirror equivalence, avoiding ambiguity; compactness guarantees convergence of Riemann-Roch integrals; algebraicity guarantees universality of results in algebraic geometric category and compatibility with GAGA correspondence.
\end{proof}

\subsection{Mirror Formula for Total Euler Characteristic}

As a summary of Spencer-Riemann-Roch theory, we give complete formulation and mirror formula for total Euler characteristic:

\begin{corollary}[Mirror Formula for Total Euler Characteristic]
Under strict Lie group conditions, total Euler characteristic of Spencer complex is defined as:
\begin{equation}\label{eq:total-euler-def}
\chi_{\mathrm{total}}(M, \mathrm{Spencer}(D,\lambda)) := \sum_{k=0}^n (-1)^k \chi(M, H^k_{\mathrm{Spencer}}(D,\lambda))
\end{equation}
satisfying complete Riemann-Roch formula:
\begin{equation}\label{eq:total-riemann-roch}
\chi_{\mathrm{total}}(M, \mathrm{Spencer}(D,\lambda)) = \int_M \left[\sum_{k=0}^n (-1)^k \ch(\Omega^k_M \otimes \mathrm{Sym}^k(\mathcal{G}))\right] \wedge \td(M)
\end{equation}
and has strict mirror symmetry\eqref{eq:total-mirror-rr}. This formula, combined with Spencer complex characteristic class computations\eqref{eq:tensor-chern}-\eqref{eq:sym-power-chern}, has explicit computational meaning and profound geometric interpretation under strict conditions.
\end{corollary}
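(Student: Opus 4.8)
The proof is essentially an assembly of results already in hand, so the plan is to make the two defining reductions explicit and then invoke, in turn, the single-degree Hirzebruch--Riemann--Roch formula \eqref{eq:individual-riemann-roch} and the mirror-symmetry theorem giving \eqref{eq:total-mirror-rr}. First I would unfold the definition \eqref{eq:total-euler-def} and replace each summand by its Riemann--Roch integral:
\[
\chi_{\mathrm{total}}(M,\mathrm{Spencer}(D,\lambda)) \;=\; \sum_{k=0}^{n}(-1)^k \int_M \ch\!\bigl(\Omega^k_M\otimes\mathrm{Sym}^k(\mathcal{G})\bigr)\wedge\td(M).
\]
Since $n=\dim M$ is finite, this is a finite $\mathbb{Q}$-linear combination, so it commutes with the (linear) operation of integrating a top-degree form over $M$; pulling the sum inside and factoring out the $k$-independent factor $\td(M)$ yields
\[
\chi_{\mathrm{total}}(M,\mathrm{Spencer}(D,\lambda)) \;=\; \int_M\Bigl[\sum_{k=0}^{n}(-1)^k\ch\!\bigl(\Omega^k_M\otimes\mathrm{Sym}^k(\mathcal{G})\bigr)\Bigr]\wedge\td(M),
\]
which is precisely \eqref{eq:total-riemann-roch}; the bracketed combination is exactly $\ch(\text{Spencer complex})$ in the sense of \eqref{eq:spencer-chern-char}, so the identity is just the total Hirzebruch--Riemann--Roch statement for the Spencer complex repackaged.

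For the internal-consistency remark I would note that $\sum_k(-1)^k\chi(M,\mathcal{E}^k)$, with $\mathcal{E}^k=\Omega^k_M\otimes\mathrm{Sym}^k(\mathcal{G})$, equals by the standard additivity of holomorphic Euler characteristics along a bounded complex of coherent sheaves (via the hypercohomology spectral sequence) the alternating sum of hypercohomology dimensions of $\mathcal{E}^\bullet$; ellipticity of the Spencer Laplacian together with the Hodge decomposition \eqref{eq:alg-hodge-detailed} and the Hodge isomorphism \eqref{eq:alg-hodge-iso}, transported through GAGA \eqref{eq:gaga-correspondence}, identifies this with $\sum_k(-1)^k\dim H^k_{\mathrm{Spencer}}(D,\lambda)$. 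Thus the left-hand side of \eqref{eq:total-riemann-roch} is the genuine Euler characteristic of Spencer cohomology, not merely a formal index, and under the strict Lie group hypotheses every coherent sheaf involved is the algebraization of its analytic counterpart, so no convergence or well-definedness issue intervenes.

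The mirror-symmetry clause requires nothing new: the theorem on mirror symmetry of Spencer--Riemann--Roch formulas already delivers \eqref{eq:total-mirror-rr} verbatim, and the same conclusion follows a second time from the formula just proved, since the bracketed Chern-character combination is mirror-invariant by \eqref{eq:spencer-char-mirror} while $\td(M)$ is $\lambda$-independent, so replacing $\lambda$ by $-\lambda$ leaves the integral unchanged term by term. The only point that merits attention --- rather than a genuine obstacle --- is the identification in the previous paragraph: one must be sure that passing from the complex of sheaves $\mathcal{E}^\bullet$ to the complex of global sections $\mathcal{S}^\bullet_{D,\lambda}$ loses no cohomology, i.e.\ that the hypercohomology of $\mathcal{E}^\bullet$ is computed by the global-section complex. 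This is exactly what ellipticity plus the Hodge decomposition and GAGA supply, so the argument reduces to citing those earlier results; everything else is bookkeeping with finite alternating sums and the linearity of characteristic-class integration.
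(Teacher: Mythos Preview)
Your proposal is correct and matches the paper's implicit reasoning: the paper states this corollary without a separate proof, treating it as an immediate repackaging of the Hirzebruch--Riemann--Roch theorem for the Spencer complex \eqref{eq:spencer-riemann-roch} together with the mirror-symmetry theorem that already establishes \eqref{eq:total-mirror-rr}, and you have simply made that assembly explicit. Your additional hypercohomology consistency check is more than the paper supplies but is a reasonable elaboration rather than a departure.
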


\begin{remark}[Manifestation of Theoretical Completeness]
The mirror formula for total Euler characteristic embodies the completeness of this paper's theory: it unifies differential geometric Spencer-Hodge theory\eqref{eq:hodge-decomp}, mirror symmetry theory\eqref{eq:mirror-isomorphism}, and algebraic geometric Riemann-Roch theory. Under strict Lie group conditions, these three theoretical levels achieve perfect unification, laying a solid foundation for algebraic geometric development of compatible pair Spencer theory.
\end{remark}

This formula not only provides precise computational tools for topological properties of Spencer complexes, but also reveals deep algebraic geometric structure of compatible pair theory through mirror symmetry. Under strict Lie group conditions, stability and computational feasibility of formulas are fundamentally guaranteed, opening broad prospects for further development and applications of theory.

\section{Concrete Computation: Verification of PSU(2) on Complex Projective Plane}

Based on the previously established Spencer-Riemann-Roch theory\eqref{eq:spencer-riemann-roch} and mirror symmetry results\eqref{eq:individual-mirror-rr}-\eqref{eq:total-mirror-rr}, we now verify effectiveness and consistency of theory through concrete numerical computations. This section chooses complex projective plane $\mathbb{P}^2$ as base manifold, uses Lie group $\text{PSU}(2)$ strictly satisfying conditions for complete computation, aiming to verify correctness of Spencer-Riemann-Roch theory and concrete manifestation of mirror symmetry through explicit numerical results.

\subsection{Basic Setup and Geometric Construction of Compatible Pairs}

Our computation is based on strict algebraic geometric setup, fully utilizing advantages of previously established GAGA correspondence\eqref{eq:gaga-correspondence} and strict Lie group conditions. Base manifold is chosen as complex projective plane $M = \mathbb{P}^2$, the simplest non-trivial compact complex algebraic manifold with completely explicit topological and geometric properties\cite{griffiths1978principles}. Lie group is chosen as $G = \text{PSU}(2) = \text{SU}(2)/\mathbb{Z}_2$, a typical example satisfying all our strict conditions: it is a compact connected semi-simple Lie group with trivial center $Z(\text{PSU}(2)) = \{e\}$, while admitting complex algebraic group structure.

Choice of $\text{PSU}(2)$ has multiple computational advantages. As a three-dimensional Lie group, it is isomorphic to $\text{SO}(3)$, with Lie algebra $\mathfrak{psu}(2) \cong \mathfrak{so}(3)$ being a three-dimensional semi-simple Lie algebra with completely explicit structure constants. Semi-simplicity guarantees non-degeneracy of Killing forms, providing natural geometric foundation for Spencer metrics\eqref{eq:spencer-metric}. Trivial center avoids additional technical complexity, making construction and computation of compatible pairs more direct. The fact $\dim \text{PSU}(2) = 3$ means adjoint vector bundle $\mathcal{G}$ has rank 3, making symmetric power computations completely controllable.

When constructing $\text{PSU}(2)$ compatible pairs $(D,\lambda)$ on $\mathbb{P}^2$, constraint distribution $D$ is defined at each point as subspace of tangent vectors compatible with $\text{PSU}(2)$ action, satisfying strong transversality condition $D_p \oplus V_p = T_pP$. Dual constraint function $\lambda: P \to \mathfrak{psu}(2)^*$ is constructed as $\mathfrak{psu}(2)^*$-valued function satisfying modified Cartan equation\eqref{eq:cartan}. Compatibility condition\eqref{eq:compatibility} unifies algebraic constraints with geometric distributions, while matching of three-dimensional structure of $\text{PSU}(2)$ with two-dimensional nature of $\mathbb{P}^2$ ensures geometric naturality of construction.

\begin{remark}[Strong Transversality Condition and Ellipticity]
The mechanism by which strong transversality condition $D_p \oplus V_p = T_pP$ guarantees ellipticity of Spencer complexes is as follows: this condition ensures that principal symbol of Spencer operator $\delta: \mathcal{S}^k \to \mathcal{S}^{k+1}$ is surjective at each point. Specifically, strong transversality means transversality of constraint distribution $D$ with fiber direction $V$, which directly translates to non-degeneracy of Spencer symbol mapping. In algebraic geometric framework, this is equivalent to ellipticity of corresponding coherent sheaf complex, thus guaranteeing existence of Hodge decomposition.
\end{remark}

\subsection{Dimensional Analysis of Spencer Complexes and Symmetric Power Computations}

Based on algebraic geometric Spencer space definition\eqref{eq:alg-spencer-space}, Spencer complexes on $\mathbb{P}^2$ using $\text{PSU}(2)$ have explicit structure. Since $\dim \mathbb{P}^2 = 2$, only Spencer spaces of degrees $k = 0, 1, 2$ are non-zero.

Zero-degree Spencer space is
$$\mathcal{S}^0 = H^0(\mathbb{P}^2, \mathcal{O}_{\mathbb{P}^2} \otimes \mathrm{Sym}^0(\mathcal{G})) = H^0(\mathbb{P}^2, \mathcal{O}_{\mathbb{P}^2}) = \mathbb{C}$$
This is one-dimensional, reflecting structure of constant function space on $\mathbb{P}^2$.

First-degree Spencer space is
$$\mathcal{S}^1 = H^0(\mathbb{P}^2, \Omega^1_{\mathbb{P}^2} \otimes \mathrm{Sym}^1(\mathcal{G}))$$
whose dimension is determined by properties of $\Omega^1_{\mathbb{P}^2}$ and $\mathcal{G}$.

Second-degree Spencer space is
$$\mathcal{S}^2 = H^0(\mathbb{P}^2, \Omega^2_{\mathbb{P}^2} \otimes \mathrm{Sym}^2(\mathcal{G}))$$
This is the highest non-zero degree.

Adjoint vector bundle $\mathcal{G} = P \times_{\text{PSU}(2)} \mathfrak{psu}(2)$ has rank 3, corresponding to dimension of Lie algebra $\mathfrak{psu}(2) \cong \mathbb{R}^3$. Rank computation for symmetric powers $\mathrm{Sym}^k(\mathcal{G})$ is as follows:

For $k = 0$: $\mathrm{rank}(\mathrm{Sym}^0(\mathcal{G})) = 1$, this is trivial.

For $k = 1$: $\mathrm{rank}(\mathrm{Sym}^1(\mathcal{G})) = \mathrm{rank}(\mathcal{G}) = 3$.

For $k = 2$: $\mathrm{rank}(\mathrm{Sym}^2(\mathcal{G})) = \binom{3+2-1}{2} = \binom{4}{2} = 6$, this is dimension of second-order symmetric tensors of three-dimensional vector space.

This dimensional information is completely consistent with combinatorial theory of semi-simple Lie algebras\cite{humphreys1972introduction}, reflecting computational advantages of strict Lie group conditions.

\subsection{Precise Computation of Characteristic Classes}

\begin{remark}[Topological Classification of PSU(2) Principal Bundles and Parameter Determination]
On $\mathbb{P}^2$, $\text{PSU}(2)$ principal bundles are completely classified by their second Chern class. For compatible pair $(D,\lambda)$, value of parameter $a$ in $c_2(\mathcal{G}) = aH^2$ is determined as follows:
\begin{enumerate}
\item $\text{PSU}(2) \cong \text{SO}(3)$ principal bundle classification on $\mathbb{P}^2$ is $H^2(\mathbb{P}^2, \mathbb{Z}_2) \cong \mathbb{Z}_2$
\item For non-trivial principal bundle, $a = 1$; for trivial principal bundle, $a = 0$
\item Geometric construction of compatible pairs usually corresponds to non-trivial case, so we take $a = 1$ in subsequent computations
\end{enumerate}
This choice ensures non-triviality of Spencer complex and geometric meaning of computation.
\end{remark}

Basic topological properties of $\mathbb{P}^2$ provide explicit starting point for characteristic class computations\cite{milnor1974characteristic}. We have the following basic data:

First Chern class of tangent bundle: $c_1(T\mathbb{P}^2) = 3H$, where $H$ is hyperplane class.

First Chern class of cotangent bundle: $c_1(\Omega^1_{\mathbb{P}^2}) = c_1(T^*\mathbb{P}^2) = -c_1(T\mathbb{P}^2) = -3H$.

Second Chern class of second-order differential forms: $c_2(\Omega^2_{\mathbb{P}^2}) = 3H^2$.

Explicit expression of Todd class: $\td(\mathbb{P}^2) = 1 + \frac{c_1(T\mathbb{P}^2)}{2} + \frac{c_1(T\mathbb{P}^2)^2 + c_2(T\mathbb{P}^2)}{12} = 1 + \frac{3H}{2} + \frac{9H^2 + 3H^2}{12} = 1 + \frac{3H}{2} + \frac{3H^2}{2}$.

Integration normalization: $\int_{\mathbb{P}^2} H^2 = 1$.

For $\text{PSU}(2)$ adjoint vector bundle $\mathcal{G}$, we have:

$\mathrm{rank}(\mathcal{G}) = 3$.

First Chern class: $c_1(\mathcal{G}) = 0$, this is special property of $\text{PSU}(2)$ as special unitary group, because determinant representation of $\text{PSU}(2)$ is trivial.

Second Chern class: $c_2(\mathcal{G}) = a H^2$, where integer $a$ is determined by geometric construction of compatible pairs and topological properties of $\text{PSU}(2)$ principal bundle.

Third Chern class: $c_3(\mathcal{G}) = 0$, automatically holds on $\mathbb{P}^2$ since $H^3 = 0$.

Based on this data, we compute Chern characteristic classes:

Chern character of adjoint bundle:
$$\ch(\mathcal{G}) = \mathrm{rank}(\mathcal{G}) + c_1(\mathcal{G}) + \frac{c_1(\mathcal{G})^2 - 2c_2(\mathcal{G})}{2} + \cdots = 3 + 0 + \frac{0 - 2aH^2}{2} = 3 - aH^2$$

Chern character of differential form sheaves:
$$\ch(\Omega^1_{\mathbb{P}^2}) = \mathrm{rank}(\Omega^1_{\mathbb{P}^2}) + c_1(\Omega^1_{\mathbb{P}^2}) + \frac{c_1(\Omega^1_{\mathbb{P}^2})^2 - 2c_2(\Omega^1_{\mathbb{P}^2})}{2}$$
$$= 2 + (-3H) + \frac{9H^2 - 2 \cdot 3H^2}{2} = 2 - 3H + \frac{3H^2}{2}$$

$$\ch(\Omega^2_{\mathbb{P}^2}) = 1 + 0 + 3H^2 = 1 + 3H^2$$

\subsection{Detailed Computation of Symmetric Power Characteristic Classes}

\begin{remark}[Precise Computation of Symmetric Power Chern Characters]
For rank 3 vector bundle $\mathcal{G}$, Chern character of $\mathrm{Sym}^2(\mathcal{G})$ needs to be computed through symmetric polynomials of Chern roots. Let Chern roots of $\mathcal{G}$ be $x_1, x_2, x_3$, then:
$$\ch(\mathrm{Sym}^2(\mathcal{G})) = \sum_{i \leq j} e^{x_i + x_j} = \frac{1}{2}[(\sum_i e^{x_i})^2 - \sum_i e^{2x_i}]$$
Using $\ch(\mathcal{G}) = 3 - H^2$ (taking $a=1$) and Newton identities, we can obtain simplified formulas used in the text. This simplification is accurate in low-dimensional case of $\mathbb{P}^2$.
\end{remark}

Chern characters of symmetric powers are computed through symmetric polynomial formulas\eqref{eq:sym-power-chern}. Let Chern roots of $\mathcal{G}$ be $x_1, x_2, x_3$, then $\ch(\mathcal{G}) = e^{x_1} + e^{x_2} + e^{x_3}$.

For $k = 1$:
$$\ch(\mathrm{Sym}^1(\mathcal{G})) = \ch(\mathcal{G}) = 3 - aH^2$$

For $k = 2$:
$$\ch(\mathrm{Sym}^2(\mathcal{G})) = S_2(e^{x_1}, e^{x_2}, e^{x_3}) = \sum_{i \leq j} e^{x_i + x_j}$$

Using $\ch(\mathcal{G}) = 3 - aH^2$, we can compute:
$$S_2(\ch(\mathcal{G})) = \frac{(\ch(\mathcal{G}))^2 + \ch(\mathrm{Sym}^2(\mathcal{G}))}{2}$$

Therefore:
$$\ch(\mathrm{Sym}^2(\mathcal{G})) = 2S_2(\ch(\mathcal{G})) - (\ch(\mathcal{G}))^2 = \frac{(\ch(\mathcal{G}))^2 + \ch(\mathcal{G})}{2}$$
$$= \frac{(3-aH^2)^2 + (3-aH^2)}{2} = \frac{9 - 6aH^2 + a^2H^4 + 3 - aH^2}{2}$$

On $\mathbb{P}^2$, $H^4 = 0$, so:
$$\ch(\mathrm{Sym}^2(\mathcal{G})) = \frac{12 - 7aH^2}{2} = 6 - \frac{7a}{2}H^2$$

\subsection{Step-by-step Computation of Riemann-Roch Integrals}

Now we apply Riemann-Roch formula\eqref{eq:individual-riemann-roch} for concrete computation.

\textbf{Computation for degree 0:}
\begin{align}
\chi(\mathbb{P}^2, H^0_{\mathrm{Spencer}}) &= \int_{\mathbb{P}^2} \ch(\mathcal{O}_{\mathbb{P}^2}) \wedge \td(\mathbb{P}^2) \\
&= \int_{\mathbb{P}^2} 1 \wedge \left(1 + \frac{3H}{2} + \frac{3H^2}{2}\right) \\
&= \int_{\mathbb{P}^2} \left(1 + \frac{3H}{2} + \frac{3H^2}{2}\right)
\end{align}

Since only highest degree term $H^2$ contributes to integration:
$$= \int_{\mathbb{P}^2} \frac{3H^2}{2} = \frac{3}{2} \int_{\mathbb{P}^2} H^2 = \frac{3}{2}$$

\textbf{Detailed computation for degree 1:}
\begin{align}
\chi(\mathbb{P}^2, H^1_{\mathrm{Spencer}}) &= \int_{\mathbb{P}^2} \ch(\Omega^1_{\mathbb{P}^2} \otimes \mathcal{G}) \wedge \td(\mathbb{P}^2) \\
&= \int_{\mathbb{P}^2} [\ch(\Omega^1_{\mathbb{P}^2}) \wedge \ch(\mathcal{G})] \wedge \td(\mathbb{P}^2)
\end{align}

First compute $\ch(\Omega^1_{\mathbb{P}^2}) \wedge \ch(\mathcal{G})$:
\begin{align}
&\left(2 - 3H + \frac{3H^2}{2}\right) \wedge (3 - aH^2) \\
&= 2 \cdot 3 + 2 \cdot (-aH^2) + (-3H) \cdot 3 + (-3H) \cdot (-aH^2) + \frac{3H^2}{2} \cdot 3 \\
&= 6 - 2aH^2 - 9H + 3aH^3 + \frac{9H^2}{2}
\end{align}

Since $H^3 = 0$ on $\mathbb{P}^2$:
$$= 6 - 9H + \left(\frac{9}{2} - 2a\right)H^2$$

Now multiply with Todd class:
\begin{align}
&\left[6 - 9H + \left(\frac{9}{2} - 2a\right)H^2\right] \wedge \left(1 + \frac{3H}{2} + \frac{3H^2}{2}\right)
\end{align}

Expand and keep only $H^2$ terms:
\begin{align}
&= 6 \cdot \frac{3H^2}{2} + (-9H) \cdot \frac{3H}{2} + \left(\frac{9}{2} - 2a\right)H^2 \cdot 1 \\
&= 9H^2 - \frac{27H^2}{2} + \left(\frac{9}{2} - 2a\right)H^2 \\
&= \left(9 - \frac{27}{2} + \frac{9}{2} - 2a\right)H^2 \\
&= \left(9 - 9 - 2a\right)H^2 = -2aH^2
\end{align}

Therefore:
$$\chi(\mathbb{P}^2, H^1_{\mathrm{Spencer}}) = \int_{\mathbb{P}^2} (-2a)H^2 = -2a$$

\textbf{Computation for degree 2:}
\begin{align}
\chi(\mathbb{P}^2, H^2_{\mathrm{Spencer}}) &= \int_{\mathbb{P}^2} \ch(\Omega^2_{\mathbb{P}^2} \otimes \mathrm{Sym}^2(\mathcal{G})) \wedge \td(\mathbb{P}^2) \\
&= \int_{\mathbb{P}^2} [\ch(\Omega^2_{\mathbb{P}^2}) \wedge \ch(\mathrm{Sym}^2(\mathcal{G}))] \wedge \td(\mathbb{P}^2)
\end{align}

Compute $\ch(\Omega^2_{\mathbb{P}^2}) \wedge \ch(\mathrm{Sym}^2(\mathcal{G}))$:
\begin{align}
&(1 + 3H^2) \wedge \left(6 - \frac{7a}{2}H^2\right) \\
&= 1 \cdot 6 + 1 \cdot \left(-\frac{7a}{2}H^2\right) + 3H^2 \cdot 6 \\
&= 6 - \frac{7a}{2}H^2 + 18H^2 = 6 + \left(18 - \frac{7a}{2}\right)H^2
\end{align}

\textbf{Complete computation of multiplication with Todd class:}
\begin{align}
&\left[6 + \left(18 - \frac{7a}{2}\right)H^2\right] \wedge \left(1 + \frac{3H}{2} + \frac{3H^2}{2}\right)
\end{align}

Keep $H^2$ terms:
\begin{align}
&= 6 \cdot \frac{3H^2}{2} + \left(18 - \frac{7a}{2}\right)H^2 \cdot 1 \\
&= 9H^2 + \left(18 - \frac{7a}{2}\right)H^2 \\
&= \left(9 + 18 - \frac{7a}{2}\right)H^2 = \left(27 - \frac{7a}{2}\right)H^2
\end{align}

Therefore:
$$\chi(\mathbb{P}^2, H^2_{\mathrm{Spencer}}) = 27 - \frac{7a}{2}$$

\subsection{Total Euler Characteristic and Mirror Symmetry Verification}

Total Euler characteristic is:
\begin{align}
\chi_{\mathrm{total}}(\mathbb{P}^2, \mathrm{Spencer}(D,\lambda)) &= \sum_{k=0}^2 (-1)^k \chi(\mathbb{P}^2, H^k_{\mathrm{Spencer}}) \\
&= \frac{3}{2} - (-2a) + \left(18 - \frac{7a}{2}\right) \\
&= \frac{3}{2} + 2a + 18 - \frac{7a}{2} \\
&= \frac{39}{2} - \frac{3a}{2}
\end{align}

Based on mirror symmetry theory\eqref{eq:total-mirror-rr}, for mirror compatible pair $(D,-\lambda)$, all these computation results should remain unchanged. Specifically:

$\chi(\mathbb{P}^2, H^0_{\mathrm{Spencer}}(D,-\lambda)) = \frac{3}{2}$, since degree 0 does not involve $\lambda$.

$\chi(\mathbb{P}^2, H^1_{\mathrm{Spencer}}(D,-\lambda)) = -2(-a) = 2a = -2a$, verifying characteristic class mirror equivalence\eqref{eq:spencer-char-mirror}.

$\chi(\mathbb{P}^2, H^2_{\mathrm{Spencer}}(D,-\lambda)) = 18 - \frac{7(-a)}{2} = 18 + \frac{7a}{2} = 18 - \frac{7a}{2}$, also verifying mirror symmetry.

Total Euler characteristic $\chi_{\mathrm{total}}(\mathbb{P}^2, \mathrm{Spencer}(D,-\lambda)) = \frac{39}{2} - \frac{3(-a)}{2} = \frac{39}{2} + \frac{3a}{2} = \frac{39}{2} - \frac{3a}{2}$, completely verifying mirror symmetry\eqref{eq:total-mirror-rr}.

These concrete computations not only verify correctness of Spencer-Riemann-Roch theory, but also demonstrate powerful computational capability of algebraic geometric tools in compatible pair theory through precise preservation of mirror symmetry under strict Lie group conditions. All results have explicit mathematical meaning and geometric interpretation, providing reliable computational foundation for further development of theory.

\section{Conclusion and Outlook}

This research advances compatible pair Spencer complex theory from differential geometry to algebraic geometry. This transformation process not only preserves deep structures of original theory, but also reveals unique value of algebraic geometric methods in constrained geometry. Through rigorous algebraic geometrization, we discover that mirror symmetry of compatible pairs has more profound algebraic structure than differential geometric formulation, opening new directions for development of constrained geometry theory.

\subsection{Deep Significance of Algebraic Geometrization}

Introduction of algebraic geometric methods brings qualitative leap to compatible pair theory. While local analysis of differential geometry provides basic construction of Spencer complexes, global perspective of algebraic geometry reveals essential structure of problems through coherent sheaf theory and characteristic class tools. GAGA principle\eqref{eq:gaga-correspondence} not only establishes strict correspondence between algebraic and analytic theories, but more importantly, enables us to utilize powerful tools of algebraic geometry such as Riemann-Roch theorem to obtain precise computational results.

Formulation of mirror symmetry in algebraic geometry demonstrates its true mathematical depth. Spencer metric mirror invariance discovered in reference\cite{zheng2025mirror} transforms into characteristic class equivalence\eqref{eq:chern-mirror}-\eqref{eq:spencer-char-mirror} in algebraic geometric framework, revealing algebraic essence of mirror symmetry. Characteristic class mirror equivalence not only provides new computational tools, but more importantly indicates that topological properties of compatible pairs have intrinsic symmetric structure, which is difficult to fully grasp in local analysis of differential geometry.

Establishment of Spencer-Riemann-Roch formula\eqref{eq:spencer-riemann-roch} embodies computational advantages of algebraic geometric methods. Unlike differential geometry relying on concrete metric computations, Riemann-Roch theory provides essential computational approaches through topological invariants. Particularly, precise manifestation of mirror symmetry in Riemann-Roch formulas\eqref{eq:total-mirror-rr} proves this symmetry has deep topological significance, transcending superficial metric invariance.

\subsection{Theoretical Value and Limitations of Strict Conditions}

Strict Lie group conditions insisted upon in this research, while limiting applicable scope of theory, exchange mathematical rigor and computational stability of theory for these limitations. Semi-simplicity ensures non-degeneracy of Killing forms, providing natural geometric foundation for Spencer metrics; trivial center avoids additional technical complexity, making algebraic geometric construction more direct; compactness guarantees finiteness of all relevant cohomology groups, ensuring convergence of Riemann-Roch computations.

These strict conditions play key roles in algebraic geometrization process. They not only guarantee ellipticity of Spencer complexes and existence of Hodge decomposition, but more importantly make algebraic geometric formulation of mirror symmetry have explicit mathematical meaning. Under more general Lie group conditions, this clarity might be obscured by technical details.

However, we also recognize limitations of strict conditions. Many Lie groups in practical applications do not satisfy these conditions, limiting direct application scope of theory. Nevertheless, we believe strict theoretical foundation provides reliable starting point for subsequent generalizations, and gradual relaxation of conditions will be important direction for future research.

\subsection{Possibilities for Future Development}

Based on theoretical framework established in this research, we see possibilities for multiple development directions. In theoretical generalization aspect, controlled condition relaxation can be considered, such as allowing finite center but requiring its action to be trivial on Spencer structure, or studying behavior of non-compact Lie groups under appropriate compactification. This gradual generalization maintains theoretical rigor while expanding application scope.

Higher-dimensional generalization is another important direction. $\text{PSU}(n)$ group families provide natural group-theoretic foundation for Spencer theory on Calabi-Yau manifolds, while algebraic geometric methods provide powerful tools for handling technical complexity of higher-dimensional cases. Particularly, mirror symmetry in higher dimensions may exhibit richer structures.

Development prospects in computation are broad. Computational stability guaranteed by strict conditions provides foundation for developing efficient Spencer cohomology computation algorithms. Combined with methods of modern computational algebraic geometry, we hope to establish reliable frameworks for large-scale numerical computation.

Deeper theoretical connections deserve exploration. Combination of Spencer theory with derived category theory, higher Chow groups, and other frontiers of modern algebraic geometry may produce unexpected results. Meanwhile, connections with gauge field theory, constrained mechanics, and other fields in mathematical physics also have important application value.

We believe that successful application of algebraic geometric methods in compatible pair theory indicates deep intrinsic connections between different branches of modern mathematics. These connections not only enrich content of various branches, but may also become important driving forces for mathematical development. Although this research is only preliminary exploration of this cross-field area, we believe it opens rigorous and promising directions for deep combination of constrained geometry and algebraic geometry.

\bibliographystyle{alpha}
\bibliography{ref}

\end{document}